\numberwithin{equation}{section}
\theoremstyle{plain}
\newtheorem{thm}{Theorem}[section]
\newtheorem{lem}[thm]{Lemma}
\newtheorem{prop}[thm]{Proposition}
\newtheorem{rem}[thm]{Remark}
\newtheorem{cor}[thm]{Corollary}
\newtheorem{conj}[thm]{Conjecture}
\newcommand\restr[2]{{
		\left.\kern-\nulldelimiterspace 
		#1 
		\right|_{#2} 
}}
\newcommand{\pf}{\noindent\begin {proof}}
\newcommand{\epf}{\end{proof}}
\begin{document}
	\title[Trigonometric Determinants via special values ]{Trigonometric Determinants via special values of Dirichlet $L$-Functions}

\author{Liwen Gao}
\address{Liwen Gao: Department of Mathematics, Nanjing University,
	Nanjing 210093, China;  gaoliwen1206@smail.nju.edu.cn}

\author{Xuejun Guo$^{\ast}$}
\address{Xuejun Guo$^{\ast}$: Department of Mathematics, Nanjing University,
	Nanjing 210093, China;  guoxj@nju.edu.cn}
\address{$^{\ast}$Corresponding author}

\thanks{The authors are supported by National Natural Science Foundation of China (Nos. 11971226, 12231009).}

\date{}
\maketitle

\noindent

\begin{abstract} 
	In this paper, we investigate the determinants involving some trigonometric functions. We establish a connection between these determinants and the special values of Dirichlet $L$-functions, thereby extending Guo's results [Determinants of trigonometric functions and class numbers. Linear Algebra Appl. \textbf{653} (2022), 33–43] to arbitrary positive integers $n$. In addition, we also prove a conjecture raised by Zhi-Wei Sun. Our main tool is the spectral decomposition of some linear operators. By the same method we obtain an explicit formula for the determinants of sine matrices. This formula is expressed as a product of Gauss sums attached to Dirichlet characters.
\end{abstract}
\medskip

\textbf{Keywords:} Determinants, spectral decomposition, $L$-functions, Gauss sums, Dirichlet characters.

\vskip 10pt

\textbf{2020 Mathematics Subject Classification:} 11C20,  11M20, 47A10.

	\section{Introduction}
	For positive integers $m$, $n$, $R_n(m)$ denotes the least positive residue of $m\bmod{n}$ and if $(m,n)=1$, $m'$ denotes the inverse of $m\bmod{n}$. Let $p$ be an odd prime and let  $M_p=(R_p(rs'))_{1\le r,s\le (p-1)/2}$. Then the classical Maillet determinant is defined as $D_p=\det(M_p),$ which was first introduced by Maillet in \cite{m1}.
	 
	In 1913, Malo \cite{m2} conjectured that this determinant equals $(-p)^{(p-3)/2}$.  
	Later, Carlitz and Olson \cite{co} showed that this conjecture was incorrect by proving
	\[
	D_p=\pm p^{(p-3)/2}h^-_p,
	\]
	where $h_p^-$ denotes the relative class number of $\mathbb{Q}(\zeta_p)$.  
	This shows that the Maillet determinant never vanishes.
	
	In 1984, Wang \cite{w1} extended the definition of the Maillet determinant to arbitrary positive integers $n$. Let $S_n=\{a\thinspace |\thinspace 1\le a<\frac{n}{2},(a,n)=1\}$ and $U_n=\{a\thinspace |\thinspace -\frac{n}{2}< a<\frac{n}{2},(a,n)=1\}$. The Maillet determinant was given by
	$$D_n=\det\bigg(R_n(rs')\bigg)_{r,s\in S_n}.$$
 Let $n$ be a positive integer. From now on, write $\phi(n)$ for the Euler function of $n$ and set $m=\frac{\phi(n)}{2}$. When $n$ is odd, Wang also established the following result.
		\begin{equation}\label{wangk}
		D_n=2^{1-m}\prod\limits_{\chi\, \text{odd}}\bigg(\sum\limits_{a\in U_n}a\chi(a)\bigg).
	\end{equation}
where $\prod\limits_{\chi\, \text{odd}}$ stands for the product over all odd Dirichlet characters modulo $n$.

 In \cite{s1}, Sun proved some elegant formulas for the determinants of tangent functions and proposed some conjuctures. Three of these conjectures were as follows.
	\begin{conj}\label{conj:new5.1}
		If $p$ is an odd prime, then 
		\begin{equation*}\label{eq:new5.1}
			\left(\frac{-2}{p}\right)\,
			\frac{\det\!\Big(\cot\!\bigl(\tfrac{jk\pi}{p}\bigr)\Big)_{1\le j,k\le (p-1)/2}}
			{\,2^{\tfrac{p-3}{2}}\,p^{\tfrac{p-5}{4}}}\;\in\;\mathbb{Z}_{>0},
		\end{equation*}
	where \(\big(\frac{\cdot}{p}\big)\) denotes the Legendre symbol.
	In addition, if $p\equiv 3 \pmod 4$, this integer is a multiple of $h(-p)$, where $h(-p)$ is the class number of $\mathbb{Q}(\sqrt{-p})$.
	\end{conj}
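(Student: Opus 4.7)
The plan is to diagonalize $M := (\cot(jk\pi/p))_{1\le j,k\le (p-1)/2}$ using odd Dirichlet characters modulo $p$, express the eigenvalues in terms of $L(1,\chi)$, and then invoke Kummer's analytic class number formula to identify the normalized determinant with the relative class number $h_p^-$ of $\mathbb{Q}(\zeta_p)$.

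The $(p-1)/2$-dimensional space of odd complex-valued functions on $(\mathbb{Z}/p\mathbb{Z})^*$ admits the basis $\{f_\chi(k)=\chi(k)\}$ indexed by odd Dirichlet characters $\chi$ mod $p$. A substitution $k\mapsto j^{-1}\ell \pmod p$, combined with the oddness of both $\chi$ and $\cot(\cdot\,\pi/p)$ on $(\mathbb{Z}/p\mathbb{Z})^*$, will show that $Mf_\chi = \lambda_\chi f_{\bar\chi}$ where $\lambda_\chi = \tfrac{1}{2}\sum_{\ell=1}^{p-1}\chi(\ell)\cot(\ell\pi/p)$. Hence in this basis $M$ is a ``twisted permutation'' operator along the involution $\chi\mapsto\bar\chi$, so
\[
\det M \;=\; \epsilon\prod_{\chi\text{ odd}}\lambda_\chi,
\]
where $\epsilon\in\{\pm 1\}$ is the sign of the involution. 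A fixed-point count (no fixed point if $p\equiv 1\pmod 4$, only the Legendre character $\chi_2$ if $p\equiv 3\pmod 4$) yields $\epsilon=(-1)^{(p-1)/4}$ or $(-1)^{(p-3)/4}$ respectively.

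To convert $\lambda_\chi$ into arithmetic data, I would combine the Hurwitz-zeta representation of $L(s,\chi)$ at $s=1$ with the identity $\pi\cot(\pi x)=\psi(1-x)-\psi(x)$ and the vanishing $\sum_a\chi(a)=0$ to obtain $\lambda_\chi = \tfrac{p}{\pi}L(1,\chi)$. The functional equation for primitive odd $\chi$ mod $p$ then gives $L(1,\chi)=\tfrac{i\pi\tau(\chi)}{p}B_{1,\bar\chi}$. Substituting these formulae into $\det M$, evaluating $\prod_{\chi\text{ odd}}\tau(\chi)$ via $\tau(\chi)\tau(\bar\chi)=\chi(-1)p=-p$ on conjugate pairs (with the isolated factor $\tau(\chi_2)=i\sqrt p$ when $p\equiv 3\pmod 4$), and applying Kummer's formula $h_p^- = 2p\prod_{\chi\text{ odd}}(-\tfrac{1}{2}B_{1,\chi})$, the powers of $\pi$, $\tau$, and $p$ telescope to
\[
\det M \;=\; \epsilon\cdot 2^{(p-3)/2}\,p^{(p-5)/4}\,h_p^-.
\]

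The final step is a residue-class verification modulo $8$: one checks in all four cases that $\epsilon=\left(\tfrac{-2}{p}\right)$, whence $\left(\tfrac{-2}{p}\right)\det M/\bigl(2^{(p-3)/2}p^{(p-5)/4}\bigr)=h_p^-\in\mathbb{Z}_{>0}$. For $p\equiv 3\pmod 4$ with $p>3$, the factor $-\tfrac{1}{2}B_{1,\chi_2}$ appearing in Kummer's product equals $h(-p)/2$, which forces $h(-p)\mid h_p^-$ (the case $p=3$ is handled directly by inspection). The main obstacle I anticipate is the delicate bookkeeping of complex phases: the powers of $i$ arising from $(-i)^{(p-1)/2}$, from the Gauss sums $\tau(\chi)$, and from the combinatorial sign $\epsilon$ must all conspire to cancel and leave the simple rational sign $\left(\tfrac{-2}{p}\right)$. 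This is especially delicate when $p\equiv 3\pmod 4$, for then $p^{(p-5)/4}$ is genuinely half-integral and the ``missing'' factor of $\sqrt p$ has to be produced precisely by $\tau(\chi_2)=i\sqrt p$.
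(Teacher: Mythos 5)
Your outline is correct, and all the intermediate identities you claim do check out (I verified that the powers of $i$ from $i^{(p-1)/2}$, from $\prod\tau(\chi)$ computed via $\tau(\chi)\tau(\bar\chi)=-p$ and $\tau(\chi_2)=i\sqrt p$, and from the involution sign $\epsilon$ indeed cancel to leave $\det M=\epsilon\,2^{(p-3)/2}p^{(p-5)/4}h_p^-$ in both residue classes mod $4$, and that $\epsilon=(-1)^{(p-1)/4}$ resp.\ $(-1)^{(p-3)/4}$ agrees with $\left(\tfrac{-2}{p}\right)$ in all four classes mod $8$). However, your route differs from the paper's in two substantive ways. First, the paper does not work with $\bigl(\cot(jk\pi/p)\bigr)$ directly: it passes to $A'_p=\bigl(\cot(jk'\pi/p)\bigr)$ with $k'$ the inverse of $k$, for which the odd characters are honest eigenvectors with eigenvalues $\tfrac{p}{\pi}L(1,\bar\chi)$, so no permutation sign appears in the diagonalization; the sign relating $\det A_p$ to $\det A'_p$ is then extracted from the inversion permutation $k\mapsto\tilde k$ on $S_p$ via results of Sun on the sign of that permutation (Lemma 2.1 of the paper). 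You instead keep the original matrix, obtain the twisted-permutation structure $Mf_\chi=\lambda_\chi f_{\bar\chi}$, and read the sign off the fixed-point count of $\chi\mapsto\bar\chi$ on odd characters --- this is arguably cleaner and avoids citing the permutation-sign results, at the cost of being tied to the prime case. Second, the paper converts $\prod_{\chi\text{ odd}}L(1,\chi)$ to $h_p^-$ in one step via Washington's relative class number formula (Proposition 2.6), whereas you route through the functional equation, explicit Gauss-sum products and $B_{1,\chi}$; both work, but your version requires the extra phase bookkeeping you flag, which the paper's normalization sidesteps entirely. One small gap: for the final assertion that $h(-p)\mid h_p^-$ when $p\equiv 3\pmod 4$, isolating the factor $-\tfrac12 B_{1,\chi_2}=h(-p)/2$ in Kummer's product does not by itself ``force'' divisibility --- you still need the classical fact that $p\prod_{\chi\ne\chi_2}\bigl(-\tfrac12 B_{1,\chi}\bigr)$ is a (rational) integer, equivalently that the minus class number of the imaginary quadratic subfield divides $h_p^-$; this is standard (it follows from the injectivity of the class-group map for the ramified extension $\mathbb{Q}(\zeta_p)/\mathbb{Q}(\sqrt{-p})$) but should be cited rather than asserted.
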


		\begin{conj}\label{conj:new5.2}
		If $n$ is a positive odd integer, then 
		\begin{equation*}\label{eq:new5.2}
			\frac{\det\!\Big(\tan\!\bigl(\tfrac{jk\pi}{n}\bigr)\Big)_{1\le j,k\le (n-1)/2}}
			{(n)^{(n-1)/4}}\;\in\;\mathbb{Z}_{>0}.
		\end{equation*}
	\end{conj}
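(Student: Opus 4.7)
The plan is to recast $T$ as a linear operator and block-diagonalize it using Dirichlet characters modulo divisors of $n$. Set $N = (n-1)/2$ and let $V$ be the space of odd complex functions on $\mathbb{Z}/n\mathbb{Z}$ that vanish at $0$; it has dimension $N$. Define the operator $L: V \to V$ by $(Lf)(j) = \sum_{k=1}^{n-1}\tan(jk\pi/n)f(k)$. Folding the sum using $f(-k)=-f(k)$ and $\tan(-\theta)=-\tan\theta$, one finds $\det(L) = 2^{N}\det(T)$, so the problem reduces to computing $\det(L)$.

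For each divisor $d>1$ of $n$, each primitive odd Dirichlet character $\chi$ mod $d$, and each $e$ with $d\mid e\mid n$, define $v_\chi^{(e)}\in V$ supported on $\{(n/e)a:(a,e)=1\}$ by $v_\chi^{(e)}((n/e)a)=\chi(a\bmod d)$. A divisor-sum computation gives $\sum_{d\mid n,\,d>1} p(d)\tau(n/d)=N$, where $p(d)$ counts primitive odd characters mod $d$, so $\{v_\chi^{(e)}\}$ forms a basis of $V$. Let $W_\chi=\operatorname{span}\{v_\chi^{(e)}:d\mid e\mid n\}$, of dimension $\tau(n/d)$. Using the identity $\sum_{m=0}^{k-1}\tan(\alpha+m\pi/k)=k\tan(k\alpha)$ valid for odd $k$, I would show $L(W_\chi)\subseteq W_{\bar\chi}$ and that in the basis $\{v_\chi^{(e)}\}_e$ the induced map equals $S_d(\chi)\cdot M_{n/d}$, where $S_d(\chi):=\sum_{a=1}^{d-1}\chi(a)\tan(a\pi/d)$ and $M_{n/d}$ depends only on $n/d$. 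An induction on the divisor lattice $\{e:d\mid e\mid n\}$ then expresses $\det(M_{n/d})$ as a power of $n/d$, yielding a closed formula for $\det(L|_{W_\chi\to W_{\bar\chi}})$ in terms of $S_d(\chi)$ and $n/d$.

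The scalar $S_d(\chi)$ is evaluated by combining $\tan\theta=\cot\theta-2\cot(2\theta)$ with the classical identity $\sum_{a=1}^{d-1}\chi(a)\cot(a\pi/d)=\frac{2d}{\pi}L(1,\chi)$ for odd $\chi$, giving $S_d(\chi)=(1-2\bar\chi(2))\cdot\frac{2d}{\pi}L(1,\chi)$. The analytic class-number formula and $|\tau(\chi)|=\sqrt{d}$ imply that $\sqrt{d}\,L(1,\chi)/\pi$ is an algebraic integer and $|S_d(\chi)|^2$ is a positive rational multiple of $d$. Collecting contributions across all primitive odd characters modulo divisors of $n$, the powers of $\pi$ cancel and the net power of $\sqrt{n}$ matches $n^{N/2}=n^{(n-1)/4}$. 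Positivity follows because complex conjugate pairs $\{\chi,\bar\chi\}$ contribute $|S_d(\chi)|^2>0$, while real-character contributions (arising from the Legendre symbols modulo primes $p\equiv 3\pmod 4$ dividing $n$) are positive multiples of class numbers via the imaginary-quadratic class-number formula.

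The main obstacle is justifying the block structure: showing $L(W_\chi)\subseteq W_{\bar\chi}$ uniformly in $j\in\mathbb{Z}/n\mathbb{Z}$ requires careful case analysis of the arithmetic-progression tangent identity when $j$ shares factors with various $n/e$, and computing $\det(M_{n/d})$ in closed form requires a M\"{o}bius-style recursion across the divisor lattice. A secondary difficulty is pinning down the global sign to guarantee positivity rather than merely nonzero integrality, which I would handle by comparison with Wang's Maillet-type formula \eqref{wangk}.
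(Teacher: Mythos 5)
First, a point of orientation: the paper does \emph{not} prove Conjecture \ref{conj:new5.2}. It proves Theorem \ref{tanthm}, a formula for the minor $\mathbf T^1_n=(\tan(jk\pi/n))_{j,k\in S_n}$ indexed by the reduced residues in $(0,n/2)$, which coincides with the matrix in the conjecture only when $n$ is prime. So you are attempting something strictly harder than what the paper does, and your strategy is the natural one: extend the paper's spectral method from the top block (functions supported on units) to the whole space of odd functions on $\mathbb Z/n\mathbb Z$, organized by the divisor lattice. Your key analytic input, $S_d(\chi)=(1-2\bar\chi(2))\tfrac{2d}{\pi}L(1,\chi)$, is exactly the paper's Lemma \ref{tanimpri}, and the block statement $L(W_\chi)\subseteq W_{\bar\chi}$ is correct: since $(Lv_\chi^{(e)})(j)=\sum_{(a,e)=1}\chi(a)\tan(ja\pi/e)$ transforms by $\bar\chi$ under the action of $(\mathbb Z/e\mathbb Z)^\times$, the image lands in the $\bar\chi$-isotypic part. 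This part of the plan is sound.

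However, there are genuine gaps beyond the ones you flag. (i) The factorization $L|_{W_\chi}=S_d(\chi)\cdot M_{n/d}$ with $M_{n/d}$ depending only on $n/d$ is false as stated: the entry of the block corresponding to level $e$ with $d\mid e\mid n$ is governed by $\sum_{(a,e)=1}\chi(a)\tan(a\pi/e)=(1-2\bar\chi(2))\tfrac{2e}{\pi}L(1,\chi_e)$, and by Lemma \ref{Lfunction} the ratio to $S_d(\chi)$ is $\tfrac{e}{d}\prod_{p\mid e,\,p\nmid d}\bigl(1-\chi^*(p)/p\bigr)$, which depends on the values $\chi^*(p)$ and not only on $e/d$. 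So the proposed M\"obius-style recursion for $\det(M_{n/d})$ as a power of $n/d$ does not get off the ground; the block determinants must be computed with these $\chi$-dependent Euler factors present (a small check at $n=9$, $d=3$ gives the block $\bigl(\begin{smallmatrix}0&6\sqrt3\\2\sqrt3&6\sqrt3\end{smallmatrix}\bigr)$ with determinant $-36$, where the sign already shows the positivity bookkeeping is delicate). (ii) Even granting a closed form for each block, integrality of the quotient by $n^{(n-1)/4}$ is not addressed: $|S_d(\chi)|^2$ being ``a positive rational multiple of $d$'' leaves denominators of generalized Bernoulli numbers uncontrolled, and in the composite case the product over characters is an integer only after a class-number/index interpretation (this is precisely where Wang's analysis of $D_n$ for composite $n$ becomes nontrivial). (iii) Positivity is asserted, not proved: the real-character blocks are $\tau(n/d)\times\tau(n/d)$ matrices whose determinants can be negative, so conjugate pairing alone does not settle the sign. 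In short, the skeleton is reasonable and genuinely extends the paper, but the structural lemma at its core needs to be corrected and the integrality and sign arguments are missing.
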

	
	\begin{conj}\label{conjcsc}
		If $p$ is an odd prime, then 
		\begin{equation*}\label{eq:new5.3}
			c_p:=\frac{1}{2^{\tfrac{p-1}{2}}\,p^{\tfrac{p-5}{4}}}\det\!\Big(\csc\!\bigl(
			\tfrac{2jk\pi}{p}\bigr)\Big)_{1\le j,k\le (p-1)/2}\;\in\;\mathbb{Z}.
		\end{equation*}
		Furthermore, $c_p=1$ if $p\equiv3\pmod8$, and $c_p=0$ if $p\equiv7\pmod 8$.
	\end{conj}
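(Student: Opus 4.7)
The plan is to apply the paper's spectral-decomposition paradigm to $C := (\csc(2jk\pi/p))_{1\le j,k\le (p-1)/2}$. Viewing $C$ as a linear operator on functions $\{1,\ldots,(p-1)/2\}\to\mathbb C$ and extending each such function oddly to $(\mathbb Z/p\mathbb Z)^*$, one checks that $C$ preserves the $(p-1)/2$-dimensional space of odd functions, for which the odd Dirichlet characters modulo $p$ form a convenient basis. The substitution $k\mapsto j^{-1}k$ in the defining sum immediately yields
\[
C\chi = \tfrac{1}{2}\,\hat g(\chi)\,\bar\chi, \qquad \hat g(\chi):=\sum_{m=1}^{p-1}\csc(2\pi m/p)\,\chi(m),
\]
for each odd character $\chi$. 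Thus the Legendre symbol $\chi_L$ --- self-conjugate precisely when $p\equiv 3\pmod 4$ --- is a genuine eigenvector with eigenvalue $\tfrac12\hat g(\chi_L)$, whereas every other odd character pairs with its complex conjugate to form a $2\times 2$ invariant block of determinant $-\tfrac14|\hat g(\chi)|^2$. Collecting, $\det C = \pm 2^{-(p-1)/2}\prod_{\chi\text{ odd}}\hat g(\chi)$.

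To evaluate $\hat g(\chi)$, combine the partial-fraction identity $\csc(2\pi m/p) = i/(\zeta^m-1)+i/(\zeta^m+1)$ (with $\zeta = e^{2\pi i/p}$) with the inverse Gauss-sum expansion $\chi(m) = \tau(\bar\chi)^{-1}\sum_a \bar\chi(a)\zeta^{am}$. The inner sums $\sum_m \zeta^{am}/(\zeta^m \pm 1)$ admit closed forms via polynomial long division --- depending on $a$ and on its parity --- and assembling the result gives
\[
\hat g(\chi) = -\frac{i}{\tau(\bar\chi)}\,A(\bar\chi), \qquad A(\chi):= p\chi(2)\Sigma(\chi)+S(\chi),
\]
where $\Sigma(\chi) := \sum_{a=1}^{(p-1)/2}\chi(a)$ and $S(\chi) := \sum_{a=1}^{p-1}a\chi(a)$. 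For $p\equiv 3\pmod 4$ and $p>3$, Dirichlet's class number formula supplies $\Sigma(\chi_L) = (2-\chi_L(2))\,h(-p)$ and $S(\chi_L) = -p\,h(-p)$, while the classical sign of the quadratic Gauss sum gives $\tau(\chi_L) = i\sqrt p$. Substituting and simplifying,
\[
\tfrac{1}{2}\hat g(\chi_L) = \sqrt p\,h(-p)\,\bigl(1-\chi_L(2)\bigr).
\]
If $p\equiv 7\pmod 8$, then $\chi_L(2)=1$, so this Legendre eigenvalue vanishes; hence $\det C=0$ and $c_p=0$, settling the second assertion of the conjecture.

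For the case $p\equiv 3\pmod 8$, the Legendre eigenvalue equals $2\sqrt p\,h(-p)$. Using $|\tau(\chi)|^2 = p$ to write $|\hat g(\chi)|^2 = |A(\chi)|^2/p$ in the $2\times 2$ blocks and gathering powers of $2$ and $p$, the claim $c_p=1$ reduces to the exact product identity
\[
\prod_{\{\chi,\bar\chi\}\ne\{\chi_L\}}|A(\chi)|^2 \;=\; \frac{2^{p-3}\,p^{(p-5)/2}}{h(-p)}.
\]
I plan to prove this by expressing each $A(\chi)$ in terms of Dirichlet $L$-values: $S(\chi)$ via the standard identity $L(1,\chi)=i\pi\tau(\chi)S(\bar\chi)/p^2$ for odd $\chi$, and the combination $p\chi(2)\Sigma(\chi)+S(\chi)$ via the $L$-value of the non-primitive character modulo $2p$ induced from $\chi$, whose Euler factor at $2$ is exactly $(1-\chi(2)/2)$. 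Taking the product over all non-Legendre odd $\chi$ and invoking the analytic class number formulas for the minus parts of $\mathbb Q(\zeta_p)$ and $\mathbb Q(\zeta_{4p})$, the ratio of their relative class numbers produces the factor $1/h(-p)$, the Euler factors at $2$ collect to $2^{p-3}$, and the Gauss-sum normalizations collect to $p^{(p-5)/2}$. Integrality of $c_p$ for the remaining residue classes $p\equiv 1\pmod 4$ (where there is no Legendre contribution) follows from the same framework: $\prod_{\chi\text{ odd}}\hat g(\chi)$ is Galois-invariant, real and valued in $\mathbb Q$, with $p$- and $2$-adic valuation bounds extracted from the structure of $A(\chi)/\tau(\bar\chi)$. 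The principal obstacle throughout is this bookkeeping step --- keeping precise track of signs, Gauss-sum normalizations across conjugate pairs, Euler factors at $2$, and the exceptional small case $p=3$, where the class number formula for $\mathbb Q(\sqrt{-3})$ carries the extra unit factor $w=6$ and is handled by direct verification.
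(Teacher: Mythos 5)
Your spectral setup is exactly the paper's (Theorem \ref{cscthm}: decompose the cosecant operator on odd functions using odd Dirichlet characters as eigenvectors), and your treatment of the case $p\equiv 7\pmod 8$ is correct: the Legendre eigenvalue carries the factor $1-\chi_L(2)$, which vanishes when $2$ is a quadratic residue, so $\det C=0$. The integrality claim is also true and follows from the correct closed form below.

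The fatal problem is the case $p\equiv 3\pmod 8$: the assertion $c_p=1$ is \emph{false}, and the paper explicitly disproves it ($p=43$ gives $c_{43}=844$; see the remark after Corollary \ref{csccor}). Consequently your ``exact product identity'' $\prod_{\{\chi,\bar\chi\}\ne\{\chi_L\}}|A(\chi)|^2=2^{p-3}p^{(p-5)/2}/h(-p)$ cannot hold in general, and the bookkeeping step you defer is where the argument breaks, in two places. First, the Euler factors at $2$: each eigenvalue carries $(1-\bar\chi(2))$, and $\prod_{\chi\ \text{odd}}(1-\chi(2))=2^{(p-1)/\ell}$ where $\ell$ is the multiplicative order of $2$ modulo $p$ (Lemma \ref{lem3.3}); this exponent depends on $\ell$, not on $p\bmod 8$, so the factors do not ``collect to $2^{p-3}$''. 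Second, the class number: the product of $L(1,\chi)$ over \emph{all} odd characters produces the full relative class number $h_p^-$ of $\mathbb{Q}(\zeta_p)$ via Proposition \ref{h}, not merely the imaginary quadratic class number $h(-p)$; isolating the Legendre character leaves the quotient $h_p^-/h(-p)$, which is generally larger than $1$. The correct evaluation is
\[
c_p=\left(\tfrac{-2}{p}\right)2^{\frac{p-1}{\ell}-1}\,h_p^-\quad(\ell\ \text{even}),\qquad c_p=0\quad(\ell\ \text{odd}),
\]
so $c_p=1$ for $p\equiv 3\pmod 8$ exactly when $2$ is a primitive root modulo $p$ \emph{and} $h_p^-=1$; for $p=43$ one has $\ell=14$ and $h_{43}^-=211$, whence $c_{43}=2^2\cdot 211=844$. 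A correct write-up should prove this formula (as the paper does) and note that it refutes, rather than confirms, the middle assertion of the conjecture.
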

	
 Conjecture \ref{conj:new5.1} was deeply connected with the Maillet determinant by a formula of Eisenstein and was resolved by Guo in \cite{g}, where it was shown that
 \begin{equation}\label{Guocot}
\det\!\Big(\cot\!\bigl(\tfrac{jk\pi}{p}\bigr)\Big)_{1\le j,k\le (p-1)/2}=\left(\frac{-2}{p}\right)2^{\tfrac{p-3}{2}}p^{\tfrac{p-5}{4}}h_{p}^{-}. 
\end{equation}
Furthermore, the prime case of Conjecture \ref{conj:new5.2} was also settled by  Guo in \cite{g}. Let $p_1=(p-1)/2$ and let $\mathbf T_p=\big(\tan\!(\tfrac{jk\pi}{p})\big)_{1\le j,k\le (p-1)/2}$. It was shown that
\begin{equation}\label{Guotan}
\det(\mathbf T_p)= i^{\,p_1}\,(-1)^{\left\lfloor \frac{p_1}{2}\right\rfloor + p_1}
\left(\prod_{\chi  \, \text{odd}} \bigl(1-2\chi(2)\bigr)\right)
\cdot \prod_{\chi\, \text{odd}} B_{1,\chi}
\cdot \prod_{\chi\, \text{odd}} \overline{\tau(\chi)} \, ,
\end{equation}
 where $B_{1,\chi}$ is the generalized Bernoulli number and $\tau(\chi)$ is the Gauss sum attached to $\chi$ (for precise definitions, see Section 5).
 
In this paper, we will extend Guo’s results (\ref{Guocot})  and (\ref{Guotan}) to arbitrary positive integers $n$ and we will prove Conjecture \ref{conjcsc}. Following Wang’s generalization of Maillet’s determinant, we study the cotangent matrices of the form
$$A_n=(\cot(\frac{jk\pi}{n}))_{j,k\in S_n}.$$

Let $B_n=\bigl(\sin(\tfrac{2jk\pi}{n})\bigr)_{j,k\in S_n}$ and
$M'_n=\bigl(R(ks')-\tfrac{n}{2}\bigr)_{k,s\in S_n}$.

\textbf{Prime case.}
Let $p$ be an odd prime. The key technical ingredient in Guo’s proof is the following
matrix identity for $n=p$,
\begin{equation*}\label{eq:prime-case}
	M'_p=-A_p B_p,\qquad \det(M'_p)=-\tfrac{1}{2}\,D_p.
\end{equation*}

\textbf{Composite case.}
When $n$ is composite, the formula becomes
\begin{equation*}\label{eq:composite-factorization}
		M'_n= -\,\mathbf S_n\,\mathbf C_n,
\end{equation*}
where 
\begin{equation*}
	\mathbf S_n= \bigl(\sin(\tfrac{2jk\pi}{n})\bigr)_{\substack{j\in S_n\\ 1\le k\le n_1}},
\qquad\mathbf C_n= \bigl(\cot(\tfrac{ks\pi}{n})\bigr)_{\substack{1\le k\le n_1\\ s\in S_n}},
\end{equation*}
and $n_1=\bigl\lfloor\tfrac{n-1}{2}\bigr\rfloor.$

 Guo’s approach does not directly extend to our situation. The reason is that the above identity does not yield the desired result due to some technical obstacles. When $n$ is composite, neither of the two matrices on the right-hand side is square. This makes the identity difficult to handle even after applying Cauchy--Binet formula. Moreover, in contrast to the prime case, the determinant of $M'_n$ could not follow immediately from the Maillet determinant $D_n$. This forces us to use a totally different method to obtain the desired result.

Let $L(s,\chi)=\sum\limits_{m=1}^{\infty}\frac{\chi(m)}{m^s}$ be the $L$-function attached to a character $\chi$. Throughout our research, it turns out that our result is intrinsically related to the values of the $L$-functions at $s=1$. Using the spectral decomposition of a suitable linear operator, we establish our main result.
\begin{thm}\label{cotthm}
	Let $n\geq 3$ be a positive integer. Then
	$$|\det(A_n)|=\bigg(\frac{n}{\pi}\bigg)^{\frac{\phi(n)}{2}}\prod_{\chi \text{odd}}L(1,\chi).
	$$
	Equivalently, an explicit form convenient for computation is
	$$|\det(A_n)|=\frac{(2n)^{\frac{\phi(n)}{2}}h_n^-}{Qw\sqrt{\prod\limits_{\chi \text{odd}}f_{\chi}}}\prod_{\chi \text{odd}}\prod\limits_{\substack{p\mid n\\p\nmid f_{\chi}}}\biggl(1-\frac{\chi^*(p)}{p}\biggr),$$
		where  $\chi^*$ stands for the primitive character associated with $\chi$ in the sense of Lemma \ref{char}, $w$ is the number of roots of unity in $\mathbb{Q}(\zeta_n)$, $f_{\chi}$ is the conductor of the character $\chi$,  $h_n^-$ denotes the relative class number of $\mathbb{Q}(\zeta_n)$ and
	$$Q=\begin{cases}
		1&\text{when n is a prime power}  ,\\
		2&\text{otherwise}.\\
	\end{cases}$$
	Moreover, when $n$ is an odd integer, the sign of $\det(A_n)$ is $\varepsilon(n)$ where $\varepsilon(n)$ is as Lemma \ref{sign}.
\end{thm}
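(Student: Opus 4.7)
My plan is to realise $A_n$ as the restriction of a natural operator on functions on $(\mathbb Z/n\mathbb Z)^\ast$ to an invariant subspace, and then diagonalise it in the Dirichlet character basis. Consider the $\phi(n)\times\phi(n)$ matrix $\widetilde C=(\cot(jk\pi/n))_{j,k\in(\mathbb Z/n)^\ast}$. Because $\cot$ is odd, pairing $k$ with $-k$ shows that $\widetilde C$ annihilates every even function ($v_{-k}=v_k$) and preserves the odd subspace ($v_{-k}=-v_k$); since $S_n$ is a complete set of coset representatives for $(\mathbb Z/n)^\ast/\{\pm 1\}$, re-indexing the restriction by $S_n$ identifies it with $2A_n$.

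For any odd Dirichlet character $\chi$ modulo $n$, the vector $(v_\chi)_k=\chi(k)$ is odd, and the substitution $k\mapsto j^{-1}k$ in the defining sum yields
$$\widetilde C\, v_\chi=\lambda_\chi\,v_{\bar\chi},\qquad \lambda_\chi:=\sum_{\substack{1\le a<n\\ (a,n)=1}}\chi(a)\cot\!\Big(\tfrac{a\pi}{n}\Big).$$
The restricted vectors $u_\chi:=v_\chi|_{S_n}$ (with $\chi$ odd) therefore form a basis of the odd subspace in which $A_n$ is block-antidiagonal: $A_n u_\chi=(\lambda_\chi/2)\,u_{\bar\chi}$. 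Pairing $\{\chi,\bar\chi\}$ into $2\times 2$ blocks and treating real odd $\chi$ as fixed points gives
$$|\det A_n|=\prod_{\chi\ \text{odd}}\frac{|\lambda_\chi|}{2}.$$

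To evaluate $\lambda_\chi$, I would use the partial-fraction expansion $\pi\cot(\pi z)=\text{p.v.}\sum_{k\in\mathbb Z}(z-k)^{-1}$ to rewrite $\cot(a\pi/n)=(n/\pi)\cdot\text{p.v.}\sum_k(a-nk)^{-1}$, and then interchange the two summations in the principal-value sense. The inner sum becomes $\sum_{m\in\mathbb Z\setminus\{0\}}\chi(R_n(m))/m$, which equals $L(1,\chi)$ from the $m>0$ part plus, using $\chi(-1)=-1$, another $L(1,\chi)$ from the $m<0$ part. Hence $\lambda_\chi=\frac{2n}{\pi}L(1,\chi)$, producing
$$|\det A_n|=\Bigl(\frac{n}{\pi}\Bigr)^{\phi(n)/2}\prod_{\chi\ \text{odd}}L(1,\chi),$$
the product being positive because conjugate pairs contribute $|L(1,\chi)|^2$ and real odd $\chi$ give $L(1,\chi)>0$ by Dirichlet.

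For the class-number formulation, I would substitute $L(1,\chi)=L(1,\chi^\ast)\prod_{p\mid n,\,p\nmid f_\chi}(1-\chi^\ast(p)/p)$, use $|L(1,\chi^\ast)|=\pi|B_{1,\chi^\ast}|/\sqrt{f_\chi}$ for primitive odd $\chi^\ast$, and invoke the analytic class number formula $h_n^-=Qw\prod_{\chi^\ast}\bigl(-\tfrac12 B_{1,\chi^\ast}\bigr)$ over primitive odd characters whose conductor divides $n$. The factors $(n/\pi)^{\phi(n)/2}$ and $(2\pi)^{\phi(n)/2}$ combine to produce $(2n)^{\phi(n)/2}$, giving the stated expression. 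For the sign of $\det A_n$ when $n$ is odd, the block-antidiagonal description above shows that the sign is a product of signs from the swaps $\chi\leftrightarrow\bar\chi$ and from real odd characters, and a careful bookkeeping identifies this with $\varepsilon(n)$ of Lemma \ref{sign}. The most delicate steps will be the rigorous principal-value interchange of summations and the combinatorial sign bookkeeping; neither needs an essentially new idea.
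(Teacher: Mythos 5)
Your proposal follows essentially the same strategy as the paper: realize the cotangent matrix as a linear operator on odd functions on $(\mathbb Z/n\mathbb Z)^\times$, decompose in the Dirichlet character basis, evaluate the resulting character--cotangent sum as $\tfrac{2n}{\pi}L(1,\chi)$, and finish with the class number formula. The one structural difference is that the paper does not work with $(\cot(jk\pi/n))$ directly: it first passes to $A'_n=(\cot(jk'\pi/n))_{j,k\in S_n}$ (inverse indices), for which each odd character is an honest eigenvector with eigenvalue $\tfrac{n}{\pi}L(1,\overline\chi)$, and then transfers back to $A_n$ through Lemma \ref{sign}. You instead keep $A_n$ and obtain the block-antidiagonal action $u_\chi\mapsto(\lambda_\chi/2)u_{\overline\chi}$; for the absolute value this is equivalent and your computation of $|\det A_n|$ is correct (including the factor $2$ from restricting the full sum over $(\mathbb Z/n\mathbb Z)^\times$ to $S_n$, and the positivity of $\prod_\chi L(1,\chi)$ via conjugate pairing). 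Your derivation of $\lambda_\chi=\tfrac{2n}{\pi}L(1,\chi)$ from the partial-fraction expansion of $\pi\cot(\pi z)$ is also fine; the paper simply cites this identity (Lemma \ref{impri}, from Kanemitsu--Tsukada) together with Lemma \ref{Lfunction} for the imprimitive correction.

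The one genuine gap is the sign statement for odd $n$. Your block structure yields $\operatorname{sign}(\det A_n)=(-1)^{N}$, where $N$ is the number of conjugate pairs $\{\chi,\overline\chi\}$ of non-real odd characters, i.e.\ $N=(\phi(n)/2-r_1)/2$ with $r_1$ the number of real odd characters mod $n$. Identifying this quantity with $\varepsilon(n)$ --- whose definition involves congruence conditions on the prime factorization of $n$ such as $p_1\equiv 1$ or $4e_1+3\pmod 8$ and $p_1+p_2\equiv 0\pmod 4$ --- is not mere bookkeeping of the $2\times 2$ blocks: it requires counting real odd characters modulo $n$ and matching the parity of $N$ to those case distinctions, which is a separate number-theoretic argument you have not sketched. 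The paper sidesteps this entirely: Lemma \ref{sign} gives $\det A_n=\varepsilon(n)\det A'_n$ by a permutation/column-sign argument (relying on results of Sun on the sign of the inversion permutation of $S_n$), and $\det A'_n>0$ is immediate because its eigenvalues pair into $|L(1,\chi)|^2$ with real odd characters contributing positive values. I recommend you either carry out the character count honestly or adopt the paper's route for the sign; as written, this step is asserted rather than proved.
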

As a direct consequence of Theorem \ref{cotthm}, we obtain the corollary as follows.
  \begin{cor}\label{cotcor} Let $n=p^t$ with $p$ an odd prime. Then
  	$$\det(A_{p^t})=\varepsilon(p^t)\frac{(2p^t)^{\tfrac{p^{t-1}(p-1)}{2}-1}h_{p^t}^{-}}{\sqrt{\prod\limits_{\chi \text{odd}}f_{\chi}}}.$$
  	In particular, when $t=1$,
  	$$\det(A_p)=\left(\tfrac{-2}{p}\right)2^{\tfrac{p-3}{2}}p^{\tfrac{p-5}{4}}h_{p}^{-}.$$
  	\end{cor}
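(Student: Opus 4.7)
The plan is to specialize Theorem \ref{cotthm} to $n=p^t$ and simplify each factor in turn, then treat $t=1$ as a further numerical reduction.

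First I would handle the auxiliary data. Since $n=p^t$ is a prime power, $Q=1$ by the definition in Theorem \ref{cotthm}. Because $p$ is an odd prime, the roots of unity in $\mathbb{Q}(\zeta_{p^t})$ are exactly the $2p^t$-th roots of unity, so $w=2p^t$; in particular $Qw=2p^t=2n$. Next I would argue that the inner product $\prod_{p\mid n,\,p\nmid f_{\chi}}(1-\chi^{*}(p)/p)$ is empty for every odd $\chi$ modulo $p^t$. Indeed, the only prime dividing $n$ is $p$, and every odd character modulo $p^t$ is nontrivial (since $\chi(-1)=-1$), so its conductor has the form $p^s$ with $1\le s\le t$; thus $p\mid f_{\chi}$ always and the inner product collapses to $1$.

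With these simplifications, the absolute-value formula in Theorem \ref{cotthm} reduces to
\[
|\det(A_{p^t})|=\frac{(2p^t)^{\phi(p^t)/2}h_{p^t}^{-}}{2p^t\,\sqrt{\prod_{\chi\,\text{odd}}f_{\chi}}}=\frac{(2p^t)^{\phi(p^t)/2-1}h_{p^t}^{-}}{\sqrt{\prod_{\chi\,\text{odd}}f_{\chi}}},
\]
and writing $\phi(p^t)/2=p^{t-1}(p-1)/2$ yields the displayed formula up to sign. Attaching the sign $\varepsilon(p^t)$ from the odd-$n$ clause of Theorem \ref{cotthm} produces the first identity.

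For the specialization $t=1$, I would compute $\prod_{\chi\,\text{odd}}f_{\chi}$ explicitly: there are $(p-1)/2$ odd characters modulo $p$, each primitive of conductor $p$, so the product equals $p^{(p-1)/2}$ with square root $p^{(p-1)/4}$. Substituting and collecting the exponent of $p$, namely $(p-3)/2-(p-1)/4=(p-5)/4$, gives $|\det(A_p)|=2^{(p-3)/2}p^{(p-5)/4}h_p^{-}$. The only remaining ingredient is the identification $\varepsilon(p)=\left(\tfrac{-2}{p}\right)$, and I expect this to be the main obstacle: it is not automatic from the general construction, and it requires unpacking the precise combinatorial formula for $\varepsilon$ in Lemma \ref{sign} and recognising the Legendre symbol. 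Everything else is routine arithmetic bookkeeping.
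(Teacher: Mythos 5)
Your proposal is correct and follows essentially the same route as the paper: specialize Theorem \ref{cotthm} with $Q=1$, $w=2p^t$, note the Euler-factor product is empty since every odd character mod $p^t$ has conductor divisible by $p$, and for $t=1$ compute $\prod_{\chi\,\text{odd}}f_{\chi}=p^{(p-1)/2}$. The one step you flag as a potential obstacle, $\varepsilon(p)=\left(\tfrac{-2}{p}\right)$, is not an obstacle at all — it is stated explicitly as the final assertion of Lemma \ref{sign}, so you may simply cite it.
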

  \begin{rem}
  	This corollary recovers Theorem 2.1 of \cite{g}.
  	\end{rem}

Let $$T_n=\bigg(\tan(\tfrac{jk\pi}{n})\bigg)_{1\le j,k\le (n-1)/2}.$$ Motivated by the identity (\ref{Guotan}), we study its minor of order $\phi(n)/2$ of $\mathbf T_{n}$
denoted by $$\mathbf T^1_n=\bigg(\tan(\tfrac{jk\pi}{n})\bigg)_{j,k\in S_n}.$$ We prove the following theorem.
\begin{thm}\label{tanthm}
	Let $n\geq 3$ be a positive  odd integer. Then
	$$\det(\mathbf T^1_n)=\varepsilon(n)\bigg(\frac{n}{\pi}\bigg)^{\frac{\phi(n)}{2}}\prod_{\chi \text{odd}}(1-2\chi(2))L(1,\chi).
	$$
\end{thm}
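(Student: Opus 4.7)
The plan is to reduce Theorem \ref{tanthm} to Theorem \ref{cotthm} via the elementary identity $\tan x = \cot x - 2\cot(2x)$, which rewrites the tangent matrix as $\mathbf T^1_n = A_n - 2 A''_n$, where $A''_n := \bigl(\cot(\tfrac{2jk\pi}{n})\bigr)_{j,k\in S_n}$. The strategy is then to show that the spectral analysis of $A_n$ underlying Theorem \ref{cotthm} carries over verbatim to $A''_n$ up to an explicit scalar, so that both matrices share the same eigenbasis and their eigenvalues differ only by a factor $\bar\chi(2)$.

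More precisely, the proof of Theorem \ref{cotthm} diagonalizes $A_n$ using the character vectors $v_\chi := (\chi(k))_{k\in S_n}$ indexed by odd Dirichlet characters $\chi$ modulo $n$, with eigenvalues $\lambda_\chi$ satisfying $\prod_{\chi\,\text{odd}}\lambda_\chi = \det(A_n)$. I would show that each $v_\chi$ is also an eigenvector of $A''_n$ with eigenvalue $\bar\chi(2)\lambda_\chi$ as follows. Because $n$ is odd, the involution $k\mapsto n-k$ preserves $S_n$, and both $\cot(\tfrac{2j(n-k)\pi}{n}) = -\cot(\tfrac{2jk\pi}{n})$ and $\chi(n-k) = -\chi(k)$ flip sign, so
\[
\sum_{k\in S_n}\cot\!\bigl(\tfrac{2jk\pi}{n}\bigr)\chi(k) \;=\; \tfrac{1}{2}\sum_{k\in(\mathbb{Z}/n\mathbb{Z})^*}\cot\!\bigl(\tfrac{2jk\pi}{n}\bigr)\chi(k).
\]
On the full group one may substitute $k'=2k$, which is a bijection on $(\mathbb{Z}/n\mathbb{Z})^*$ because $n$ is odd; using the $\pi$-periodicity of $\cot$ this pulls out a factor $\bar\chi(2)$, and folding back to $S_n$ by the same symmetry yields $A''_n v_\chi = \bar\chi(2)\lambda_\chi v_\chi$.

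Combining these, $\mathbf T^1_n v_\chi = (1-2\bar\chi(2))\lambda_\chi v_\chi$, and multiplying the eigenvalues gives
\[
\det(\mathbf T^1_n) \;=\; \prod_{\chi\,\text{odd}}(1-2\bar\chi(2))\cdot \det(A_n).
\]
Since $\chi\leftrightarrow\bar\chi$ is an involution on the set of odd characters, the first product equals $\prod_{\chi\,\text{odd}}(1-2\chi(2))$. Substituting the formula of Theorem \ref{cotthm} (with its sign $\varepsilon(n)$) then produces the stated identity.

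The main obstacle to watch for is that multiplication by $2$ need not preserve $S_n$, so the eigenvalue relation $A''_n v_\chi = \bar\chi(2)\lambda_\chi v_\chi$ cannot be obtained from a naive row or column permutation of $A_n$. The passage through $(\mathbb{Z}/n\mathbb{Z})^*$ using the oddness of $\chi$ is precisely what circumvents this; one should also verify that the odd characters remain linearly independent when restricted to $S_n$ (which follows again by extending any linear relation via $\chi(n-k)=-\chi(k)$), so that reading off $\det(A_n)$ as the product of the $\lambda_\chi$ is legitimate.
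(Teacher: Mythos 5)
Your route is in substance the same as the paper's: both hinge on the identity $\tan x=\cot x-2\cot(2x)$ together with the substitution $k\mapsto 2k$ on $(\mathbb Z/n\mathbb Z)^\times$ to pull out the factor $1-2\chi(2)$, and both read off the determinant from the action on the character basis. The paper packages this as Lemma \ref{tanimpri} (the trig identity applied inside the character sum) and then reruns the spectral argument for a new operator with kernel $\tan(jk'\pi/n)$, whereas you apply the identity at the matrix level, $\mathbf T^1_n=A_n-2A''_n$, and reuse the spectral data of Theorem \ref{cotthm} directly. The latter is a slightly cleaner bookkeeping of the same computation.

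There is, however, one inaccuracy you should repair. The vectors $v_\chi=(\chi(k))_{k\in S_n}$ are \emph{not} eigenvectors of $A_n=\bigl(\cot(\tfrac{jk\pi}{n})\bigr)_{j,k\in S_n}$: folding to $U_n$ and substituting $m=jk$ gives
\begin{equation*}
(A_n v_\chi)(j)=\tfrac12\sum_{k\in U_n}\cot\!\bigl(\tfrac{jk\pi}{n}\bigr)\chi(k)=\overline{\chi}(j)\cdot\tfrac{n}{\pi}L(1,\chi),
\end{equation*}
so $A_n v_\chi=\tfrac{n}{\pi}L(1,\chi)\,v_{\overline\chi}$, and $v_\chi$ is an eigenvector only when $\chi$ is real. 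This is exactly why the paper diagonalizes the primed matrix $A'_n=\bigl(\cot(\tfrac{jk'\pi}{n})\bigr)$ and then passes back to $A_n$ via Lemma \ref{sign}. Your identity $A''_n v_\chi=\overline{\chi}(2)\,A_n v_\chi$ is correct, so $\mathbf T^1_n v_\chi=(1-2\overline{\chi}(2))\tfrac{n}{\pi}L(1,\chi)\,v_{\overline\chi}$; but ``multiplying the eigenvalues'' is not literally the justification for the determinant. One must either note that $A_n$ and $\mathbf T^1_n$ both act on the basis $\{v_\chi\}$ as the \emph{same} involution $\chi\mapsto\overline\chi$ composed with scalars, so the permutation sign cancels in the ratio and $\det(\mathbf T^1_n)=\prod_{\chi\,\text{odd}}(1-2\chi(2))\cdot\det(A_n)$ survives, or else work throughout with $\mathbf T^{(2)}_n$ and $A'_n$ and invoke Lemma \ref{sign} at the end, as the paper does. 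With that one fix the argument is complete.
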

\begin{rem}
	Theorem \ref{tanthm} generalizes Guo’s identity (\ref{Guotan}). 
	\end{rem}
	
Let $C_n=\bigg(\csc(\tfrac{2jk\pi}{n})\bigg)_{j,k\in S_n}.$ We also obtain a corresponding result for $C_n$.
\begin{thm}\label{cscthm}
	Let $n\geq 3$ be a positive odd integer. Then
		$$\det(C_n)=\varepsilon(n)\bigg(\frac{n}{\pi}\bigg)^{\frac{\phi(n)}{2}}\prod_{\chi \text{odd}}(1-\chi(2))L(1,\chi).
	$$
\end{thm}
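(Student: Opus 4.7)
The plan is to reduce the computation to Theorem \ref{cotthm} via the elementary trigonometric identity
$$\csc(2\theta)=\cot(\theta)-\cot(2\theta),$$
which, applied entrywise, gives the matrix decomposition $C_n=A_n-A'_n$ with $A'_n=\bigl(\cot(\tfrac{2jk\pi}{n})\bigr)_{j,k\in S_n}$. Since $n$ is odd, $2$ is a unit modulo $n$, and I would reuse the spectral decomposition already employed for Theorem \ref{cotthm}: the character vectors $v_\chi=(\chi(j))_{j\in S_n}$, indexed by the odd Dirichlet characters $\chi$ of $(\mathbb{Z}/n)^{\times}$, furnish a canonical decomposition of $\mathbb{C}^{S_n}$ that simultaneously block-diagonalises both $A_n$ and $A'_n$.

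The key computation is the following. Using the common oddness of $\cot(\cdot\,\pi/n)$ and of $\chi$ to rewrite sums over $S_n$ as $\tfrac{1}{2}$ times sums over $(\mathbb{Z}/n)^{\times}$, and then applying the substitution $\ell=2jk$ (legitimate because $\gcd(2,n)=1$), one obtains
$$A_n v_\chi=\lambda_\chi\, v_{\overline{\chi}},\qquad A'_n v_\chi=\overline{\chi(2)}\,\lambda_\chi\, v_{\overline{\chi}},$$
where $\lambda_\chi=\sum_{\ell\in S_n}\cot(\tfrac{\ell\pi}{n})\chi(\ell)$. Thus, on the two-dimensional subspace spanned by $v_\chi$ and $v_{\overline{\chi}}$ for a complex odd $\chi$, the matrix of $C_n$ is off-diagonal with entries $(1-\chi(2))\overline{\lambda_\chi}$ and $(1-\overline{\chi(2)})\lambda_\chi$, contributing the real factor $-|1-\chi(2)|^{2}\,|\lambda_\chi|^{2}$ to the determinant; on a one-dimensional real-character subspace the analogous contribution is $(1-\chi(2))\lambda_\chi$. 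Regrouping $\chi$ with $\overline{\chi}$ and using $(1-\chi(2))(1-\overline{\chi(2)})=|1-\chi(2)|^{2}$, one then deduces
$$\det(C_n)=\det(A_n)\,\prod_{\chi\text{ odd}}\bigl(1-\chi(2)\bigr).$$

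Substituting $\det(A_n)=\varepsilon(n)\bigl(\tfrac{n}{\pi}\bigr)^{\phi(n)/2}\prod_{\chi\text{ odd}}L(1,\chi)$ from Theorem \ref{cotthm} then yields the desired identity. The main obstacle I expect is the careful verification that the change of variables $\ell=2jk$ produces precisely the scalar $\overline{\chi(2)}$ on the block for $v_\chi$, together with the bookkeeping of $\chi$ versus $\overline{\chi}$ blocks so that the final product is manifestly real and the signs of the real-character contributions work out correctly. The sign $\varepsilon(n)$ then propagates from Theorem \ref{cotthm} automatically, because $\prod_{\chi\text{ odd}}(1-\chi(2))$ is non-negative: each complex pair contributes $|1-\chi(2)|^{2}\geq 0$, and each real odd character contributes $1-\chi(2)\in\{0,2\}$.
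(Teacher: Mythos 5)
Your proposal is correct and follows essentially the same route as the paper: both rest on the spectral decomposition of the cosecant matrix on the odd character vectors, with the factor $1-\overline{\chi}(2)$ emerging from a cofunction identity for $\csc(2x)$ and the substitution $k\mapsto 2k$. The only difference is organizational: you package the reduction as the entrywise matrix identity $C_n=A_n-\bigl(\cot(\tfrac{2jk\pi}{n})\bigr)_{j,k\in S_n}$ and then quote Theorem \ref{cotthm} wholesale, whereas the paper proves the eigenvalue identity $\sum_{j\in U_n}\chi(j)\csc(\tfrac{2j\pi}{n})=(1-\overline{\chi}(2))\tfrac{2n}{\pi}L(1,\chi)$ directly (Lemma \ref{cscimpri}, via $\csc 2x=\tfrac12(\cot x+\tan x)$) and reruns the operator argument of Theorem \ref{cotthm}.
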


The following corollary, as a consequence of Theorem \ref{cscthm}, implies Conjecture \ref{conjcsc}.
\begin{cor}\label{csccor} Let $p$ be an odd prime. Let $\ell$ be the order of $2$ in the multiplicative group $(\mathbb Z/p\mathbb Z)^\times$. Then
	$$\det(C_{p})=	
	\begin{cases}
		\displaystyle \left(\tfrac{-2}{p}\right)2^{\frac{p-1}{\ell}}2^{\tfrac{p-3}{2}}p^{\tfrac{p-5}{4}}h_{p}^{-}, & \text{if $\ell$ is even},\\[6pt]
			\displaystyle	0, & \text{if $\ell$ is odd}.
	\end{cases}$$
	In particular, if $p\equiv 7\bmod{8}$, then $\det(C_{p})=0$.
\end{cor}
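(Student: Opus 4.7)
The plan is to specialize Theorem~\ref{cscthm} at $n=p$, evaluate the auxiliary factor $\prod_{\chi\text{ odd}}(1-\chi(2))$ in closed form depending on the order $\ell$ of $2$ in $G:=(\mathbb Z/p\mathbb Z)^\times$, and then eliminate the $L$-values by comparison with Corollary~\ref{cotcor}. The analysis of that factor is the only real content; the rest is essentially bookkeeping.

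To analyze $\prod_{\chi\text{ odd}}(1-\chi(2))$, I use that as $\chi$ ranges over $\widehat G$ the value $\chi(2)$ ranges over the $\ell$-th roots of unity, each value being attained by exactly $(p-1)/\ell$ characters. The decisive dichotomy is whether $-1\in H:=\langle 2\rangle$. If $\ell$ is odd, then $H$ has odd order, so $-1\notin H$; the image of $-1$ in the quotient $G/H$ has order $2$, so picking any character of $G/H$ that is nontrivial on that image yields an odd $\chi$ of $G$ with $\chi(2)=1$. This contributes a zero factor and gives $\det(C_p)=0$. If instead $\ell$ is even, then $-1=2^{\ell/2}\in H$, so $\chi(-1)=\chi(2)^{\ell/2}$, and $\chi$ is odd exactly when $\chi(2)=\zeta_\ell^j$ with $j$ odd. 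Grouping characters by their value on $2$ gives
\[
\prod_{\chi\text{ odd}}(1-\chi(2))=\left(\prod_{\substack{0\le j<\ell\\ j\text{ odd}}}(1-\zeta_\ell^j)\right)^{(p-1)/\ell}.
\]
Dividing $x^\ell-1$ by $\prod_{j\text{ even}}(x-\zeta_\ell^j)=x^{\ell/2}-1$ yields $\prod_{j\text{ odd}}(x-\zeta_\ell^j)=x^{\ell/2}+1$, which equals $2$ at $x=1$. Hence the product equals $2^{(p-1)/\ell}$.

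To conclude, I compare Theorem~\ref{cotthm} at $n=p$ with Corollary~\ref{cotcor}: positivity of $|\det(A_p)|$ and of $h_p^-$ pins down $\varepsilon(p)=\left(\tfrac{-2}{p}\right)$ and gives $(p/\pi)^{(p-1)/2}\prod_{\chi\text{ odd}}L(1,\chi)=2^{(p-3)/2}p^{(p-5)/4}h_p^-$. Substituting these and the closed form above into Theorem~\ref{cscthm} yields the stated formula in the case $\ell$ even. For the last assertion, when $p\equiv 7\pmod 8$ one has $\left(\tfrac{2}{p}\right)=1$, so $\ell\mid(p-1)/2$; but $(p-1)/2=4k+3$ is odd, forcing $\ell$ odd and landing us in the vanishing branch. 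The main subtlety in this plan is translating the parity condition on $\chi$ into the right condition on $\chi(2)$ (together with keeping track of multiplicities in the fiber over $\widehat H$); once that is in hand, the roots-of-unity identity and the sign reconciliation with Corollary~\ref{cotcor} are straightforward.
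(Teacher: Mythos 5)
Your proposal is correct and follows essentially the same route as the paper: specialize Theorem~\ref{cscthm} to $n=p$, evaluate $\prod_{\chi\ \text{odd}}(1-\chi(2))$ by grouping characters according to the value $\chi(2)$ (this is exactly the paper's Lemma~\ref{lem3.3}, which you re-derive, with a somewhat more careful treatment of the odd-$\ell$ case via $-1\notin\langle 2\rangle$), and convert the product of $L$-values into $2^{(p-3)/2}p^{(p-5)/4}h_p^-$ via the class-number formula and Lemma~\ref{sign}. No gaps.
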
	
\begin{rem}
	Our results indicate that Sun’s Conjecture \ref{conjcsc} needs a refinement. For example, via {\tt Mathematica} we find that when $p=43$, $c_p=844$. This provides a counterexample. The optimal form is obtained by Corollary \ref{csccor}.
	\end{rem}
	
	We recall a classical identity
	\begin{equation}\label{Guosin}
	\det\!\Big(\sin\!\bigl(\tfrac{2jk\pi }{p}\bigr)\Big)_{1\le j,k\le (p-1)/2}=(-1)^{\tfrac{(p-1)(p-3)}{8}}2^{-\tfrac{p-1}{2}}p^{\tfrac{p-1}{4}}. 
	\end{equation}
	 Inspired by the identity (\ref{Guosin}), we also derive the following theorem.
	\begin{thm}\label{sinthm} Let $n\geq 3$ be a positive integer. Then
		\begin{equation*}
			|\det(B_n)|=\begin{cases}
				2^{-\phi(n)/2}\prod\limits_{\substack{\chi\bmod{n}\\ \chi\text{odd}}} \sqrt{f_{\chi}}\qquad &\text{when  n is odd and square-free},\\
				2^{-\phi(\tfrac{n}{2})/2}\prod\limits_{\substack{\chi\bmod{\tfrac{n}{2}}\\ \chi\text{odd}} }\sqrt{f_{\chi}}\qquad &\text{when  n is even and square-free},\\
				0\qquad &\text{otherwise}.
			\end{cases}
		\end{equation*}
	\end{thm}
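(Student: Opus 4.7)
The plan is to realize $B_n$, up to an overall factor of $2$, as the matrix in the basis of odd Dirichlet characters modulo $n$ of a finite convolution-type operator whose spectrum is governed by Gauss sums, in parallel with the strategy used for Theorem~\ref{cotthm}.

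First I would enlarge $B_n$ to the $\phi(n)\times\phi(n)$ matrix $\widetilde B_n=\bigl(\sin(2\pi jk/n)\bigr)_{j,k\in(\mathbb Z/n\mathbb Z)^{\times}}$. Since $\sin$ is odd in each argument, $\widetilde B_n$ annihilates the subspace $V^{+}\subset\mathbb C^{(\mathbb Z/n)^{\times}}$ of functions fixed under $k\mapsto -k$ and preserves the complementary subspace $V^{-}$ of antifixed functions. A direct computation shows that, in the basis $\{e_j-e_{-j}:j\in S_n\}$ of $V^{-}$, the restriction $\widetilde B_n|_{V^{-}}$ is represented by $2B_n$, and therefore $\det(B_n)=2^{-\phi(n)/2}\det(\widetilde B_n|_{V^{-}})$. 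The $\phi(n)/2$ odd Dirichlet characters modulo $n$ furnish a second basis of $V^{-}$, and expanding $\sin$ in exponentials together with $\chi(-1)=-1$ gives
\[
\sum_{k\in(\mathbb Z/n)^{\times}}\sin\!\left(\tfrac{2\pi jk}{n}\right)\chi(k)=-i\,G_n(\chi,j),\qquad G_n(\chi,j):=\sum_{k}\chi(k)e^{2\pi ijk/n}.
\]

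Next I would invoke the classical identity expressing $G_n(\chi,j)$ for an induced character in terms of the Gauss sum $\tau(\chi^{*})$ of the primitive character $\chi^{*}$ of conductor $f_\chi$, namely $G_n(\chi,j)=\mu(n/f_\chi)\,\chi^{*}(n/f_\chi)\,\overline{\chi^{*}}(j)\,\tau(\chi^{*})$ when $\gcd(n/f_\chi,f_\chi)=1$, together with the appropriate vanishing otherwise. This shows that $\widetilde B_n$ sends each odd character $\chi$ to a scalar multiple of $\overline{\chi}$, so each pair $\{\chi,\overline{\chi}\}$ spans a two-dimensional invariant subspace on which $\widetilde B_n|_{V^{-}}$ acts via the anti-diagonal block $\bigl(\begin{smallmatrix}0&-i\alpha(\overline{\chi})\\ -i\alpha(\chi)&0\end{smallmatrix}\bigr)$ with $\alpha(\chi):=G_n(\chi,1)$; when $\chi=\overline{\chi}$ there is instead a diagonal eigenvalue $-i\alpha(\chi)$. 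In the square-free case the M\"obius factor is $\pm 1$, so $|\alpha(\chi)|=\sqrt{f_\chi}$; multiplying the block determinants and using $\tau(\chi^{*})\tau(\overline{\chi^{*}})=\chi^{*}(-1)f_\chi$ yields $|\det(\widetilde B_n|_{V^{-}})|=\prod_{\chi\text{ odd}}\sqrt{f_\chi}$, and dividing by $2^{\phi(n)/2}$ gives the first formula. The even square-free case $n=2m$ with $m$ odd square-free then reduces to the odd case via the canonical isomorphism $(\mathbb Z/n)^{\times}\cong(\mathbb Z/m)^{\times}$, which identifies the odd characters modulo $n$ and modulo $m$ together with their conductors, and replaces $\phi(n)$ by $\phi(n/2)$.

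The main obstacle is the non-square-free case, where the target is $\det(B_n)=0$. When some prime $p$ satisfies $p^{2}\mid n$, a short combinatorial argument on the character group $(\mathbb Z/n)^{\times}\cong\prod_{q\mid n}(\mathbb Z/q^{v_q(n)})^{\times}$ produces an odd character $\chi$ whose conductor $f_\chi$ satisfies $p^{2}\mid n/f_\chi$; hence $\mu(n/f_\chi)=0$ and $\alpha(\chi)=0$, collapsing the corresponding block to determinant $0$ and forcing $\det(B_n)=0$. The delicate point is the case $\gcd(n/f_\chi,f_\chi)>1$, where the clean M\"obius formula for $G_n(\chi,j)$ breaks down; here I would fall back on a refined inclusion-exclusion over divisors of $n/f_\chi$ to reach the same vanishing conclusion. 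This character-theoretic vanishing is the step that genuinely distinguishes the proof from that of Theorem~\ref{cotthm}, where the analogous factor $L(1,\chi)$ never vanishes.
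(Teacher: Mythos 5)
Your square-free computation is sound and is essentially the paper's argument: both diagonalize the sine kernel using the odd Dirichlet characters and reduce everything to $\prod_{\chi\ \text{odd}}|\tau(\chi)|$, then invoke $|\tau(\chi)|=\sqrt{f_\chi}$ for square-free modulus. The one real difference is that you keep the symmetric kernel $\sin(2\pi jk/n)$, so each odd $\chi$ is sent to a multiple of $\overline\chi$ and you get antidiagonal $2\times 2$ blocks, whereas the paper first replaces $k$ by $k'$ (its Lemma \ref{sign} controls the resulting permutation and sign changes, giving $|\det B_n|=|\det B_n'|$) so that the odd characters become genuine eigenvectors with eigenvalues $\tau(\overline\chi)/2i$. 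Both routes yield the same absolute value, and yours has the minor advantage of not needing Lemma \ref{sign}; your treatment of the even square-free case by passing to the isomorphic character group mod $n/2$ is also consistent with the paper's Proposition \ref{sineven}.

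The gap is in the non-square-free case. Your primary mechanism is to produce an odd $\chi$ with $p^{2}\mid n/f_\chi$, hence $\mu(n/f_\chi)=0$. For $n=p^{2}$ with $p$ odd this is impossible: every odd character mod $p^{2}$ has conductor $p$ or $p^{2}$, so $n/f_\chi\in\{1,p\}$ and $\mu(n/f_\chi)\neq 0$. The vanishing there comes from the \emph{other} factor in the identity $\tau(\chi)=\chi^{*}(n/f_\chi)\,\mu(n/f_\chi)\,\tau(\chi^{*})$ (the paper's Lemma \ref{de}): taking $\chi$ induced from a primitive odd $\psi\bmod p$ gives $f_\chi=p$ and $\chi^{*}(n/p)=\psi(n/p)=0$ because $p\mid n/p$. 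This is exactly the paper's Lemma \ref{zero} and is the clean replacement for your vaguely described ``refined inclusion--exclusion''; note also that $G_n(\chi,j)=\overline{\chi}(j)\tau(\chi)$ holds for all $(j,n)=1$ with no primitivity hypothesis, so the ``delicate case'' you flag does not actually arise. Separately, the ``otherwise $0$'' clause fails at $n=4$: the unique odd character mod $4$ is primitive of conductor $4$ with $|\tau|=2$, so your own formula gives $|\det B_4|=2^{-1}\cdot 2=1$, in agreement with $B_4=(\sin(\pi/2))=(1)$ and with the paper's Proposition \ref{sineven}, but contradicting the theorem as stated. No argument can close that case; for every other non-square-free $n$ the corrected character argument (or, for $4\mid n$ with $n>4$, the observation that the rows indexed by $j$ and $n/2-j$ coincide) does give $0$.
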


	\section{Determinants of cotangent matrices}
	
Before we start to prove the main theorem, we establish some preliminary results. 
\begin{lem}\label{sign}
	Let $n$ be a positive integer and $g$ be an odd fucntion defined on $\mathbb{Z}$ satisfying with period $n$ (i.e. $g(i+n)=g(i)$ for all $i\in\mathbb{Z}$). Define
	\begin{equation*}
		F_n=\big(g(jk)\big)_{j,k\in S_n},\qquad F_n'=\big(g(jk')\big)_{j,k\in S_n}.
	\end{equation*}
	Then 
		$$|\det(F'_n)|=|\det(F_n)|.$$
	Moreover, if $n= \prod\limits_{i=1}^r p_i^{e_i}$ is odd where $p_1,\dots,p_r$ are distinct odd primes, then 
	$$\det(F_n)=\varepsilon(n)\det(F'_n).$$
where $$\varepsilon(n)=
\begin{cases}
	(-1)^{\phi(n)/2}&r=1\ \text{and}\ p_1\equiv1\ \text{or}\ 4e_1+3\pmod8,\\
	(-1)^{\phi(n)/2+1}&r=1\ \text{and}\ p_1\not\equiv1\ \text{and}\ 4e_1+3\pmod8,\\
	(-1)^{\phi(n)/2+1}&r=2\ \text{and}\ p_1+p_2\equiv0\pmod4,\\
	(-1)^{\phi(n)/2}& \text{otherwise}.
\end{cases}$$
In particular, when $n=p$ is an odd prime, $\varepsilon(p)=\left(\tfrac{-2}{p}\right).$
\end{lem}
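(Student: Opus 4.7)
My plan is to exhibit $F_n'$ as a signed column-permutation of $F_n$, where the underlying permutation is induced by inversion modulo $n$, and then to pin down the overall sign by analyzing this permutation as an involution on $S_n$ with a very controlled fixed-point set.

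First, for each $k\in S_n$ I would define $\sigma(k)\in S_n$ and $\epsilon_k\in\{\pm1\}$ by requiring $k^{-1}\equiv\epsilon_k\sigma(k)\pmod n$; equivalently, $\epsilon_k=+1$ exactly when the residue of $k^{-1}$ in $(0,n)$ is less than $n/2$. Because $g$ is odd and $n$-periodic, for every $j$ one has $g(jk^{-1})=g(j\epsilon_k\sigma(k))=\epsilon_k\,g(j\sigma(k))$, i.e. the $k$-th column of $F_n'$ is $\epsilon_k$ times the $\sigma(k)$-th column of $F_n$. Taking determinants I would obtain
$$\det(F_n')=\operatorname{sgn}(\sigma)\prod_{k\in S_n}\epsilon_k\cdot\det(F_n),$$
which already gives the first assertion $|\det F_n|=|\det F_n'|$. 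For odd $n$ it then remains to show that $\operatorname{sgn}(\sigma)\prod_k\epsilon_k=\varepsilon(n)$.

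Next, I would observe that $\sigma$ is an involution on $S_n$ (because inversion commutes with $x\mapsto-x$), and that its fixed points are exactly the $k\in S_n$ with $k^2\equiv\pm1\pmod n$. Writing $I_+(n)$ and $I_-(n)$ for the number of fixed points arising from $k^2\equiv1$ and $k^2\equiv-1$ respectively, a short two-case check on the values $\epsilon_k=\pm1$ shows $\epsilon_{\sigma(k)}=\epsilon_k$ on every non-trivial cycle, so each $2$-cycle contributes $+1$ to $\prod\epsilon_k$; at fixed points one has $\epsilon_k=+1$ if $k^2\equiv1$ and $\epsilon_k=-1$ if $k^2\equiv-1$. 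Since the number of $2$-cycles equals $(\phi(n)/2-I_+-I_-)/2$, combining the two signs yields
$$\varepsilon(n)=(-1)^{(\phi(n)/2-I_+(n)+I_-(n))/2}.$$

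Finally I would evaluate $I_\pm(n)$ by the Chinese Remainder Theorem: $I_+(n)=2^{r-1}$ always, while $I_-(n)=2^{r-1}$ when every $p_i\equiv1\pmod 4$ and $I_-(n)=0$ otherwise. Substituting into the exponent and splitting on $r\in\{1,2,\ge 3\}$ should produce the four cases of the statement. The hard part will be this last bookkeeping: for $r=1$ with $n=p^{e_1}$, the quantity $\phi(n)/2=p^{e_1-1}(p-1)/2$ must be tracked modulo $4$, forcing a joint analysis of $p\pmod 8$ with the parity of $e_1$ that crystallizes as ``$p\equiv 1$ or $4e_1+3\pmod 8$''; for $r=2$ the correction $-I_++I_-\in\{-2,0\}$ produces a further parity manipulation that collapses cleanly to the congruence $p_1+p_2\pmod 4$; for $r\ge 3$ the correction $-I_++I_-$ is divisible by $4$, and the answer reduces to $(-1)^{\phi(n)/2}$. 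The prime-case assertion $\varepsilon(p)=\left(\tfrac{-2}{p}\right)$ then follows by comparing both sides against the four residues of $p\pmod 8$.
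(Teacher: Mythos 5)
Your proposal is correct, and its first half coincides with the paper's argument: both exhibit $F_n'$ as a signed column-permutation of $F_n$ via the map $k\mapsto k^{-1}$ reduced into $S_n$, which immediately gives $|\det F_n'|=|\det F_n|$ and the identity $\det F_n'=\operatorname{sgn}(\sigma)\prod_k\epsilon_k\cdot\det F_n$. Where you diverge is in evaluating that sign. The paper simply imports both ingredients from Sun's work \cite{s2}: Theorem~1.1 there gives $\operatorname{sgn}(\tau_n)$ and Lemma~2.1 there gives $\#\{k:\epsilon_k=-1\}\equiv\phi(n)/2-\delta_{r,1}\pmod 2$, and the four cases of $\varepsilon(n)$ are read off by combining the two citations. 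You instead prove both facts from scratch by observing that $\sigma$ is an involution whose fixed points are the $k$ with $k^2\equiv\pm1\pmod n$, that $\epsilon$ is constant on each $2$-cycle (so $2$-cycles contribute trivially to $\prod\epsilon_k$), and that the fixed points contribute $+1$ or $-1$ according as $k^2\equiv 1$ or $-1$; this yields the closed form $\varepsilon(n)=(-1)^{(\phi(n)/2-I_+(n)+I_-(n))/2}$ with $I_+(n)=2^{r-1}$ and $I_-(n)\in\{0,2^{r-1}\}$ by CRT. I checked this formula against all four cases of the statement (splitting $r=1$ by $p_1\bmod 8$ and the parity of $e_1$, and $r=2$ by $p_1+p_2\bmod 4$) and it reproduces them exactly, including $\varepsilon(p)=\left(\tfrac{-2}{p}\right)$. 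So your route is self-contained where the paper's is citation-dependent, at the cost of the final modular bookkeeping, which you correctly flag as the delicate part and which does go through.
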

\begin{proof}
	Let $k\in S_n$ and let  $\tilde k\in S_n$ be the unique integer such that $k\tilde k\equiv \pm1 \pmod{n}$ and define $\tau_n$: $S_n\rightarrow S_n$ by $\tau_n(k)=\tilde k$. Obviously,  $\tau_n$ is a permutation of $S_n$.
	Since $g$ is an odd function, we obtain \(F'_n\) from \(F_n\) in two steps:
	\begin{enumerate}
		\item reorder the columns according to the permutation \(k\mapsto \tilde k\),
		\item multiply by \(-1\) each column indexed by $k$ with \(k\tilde k\equiv -1 \pmod{n}\).
	\end{enumerate}
Hence
\[
\det(F_n')=\operatorname*{sign}(\tau_n(k))(-1)^{\#N_n^{inv}}\det(F_n).
\]
where $N_n^{inv}=\{k\in S_n\,|\,\exists\,\tilde k\in S_n \,\text{such that}\, k\tilde k\equiv -1 \pmod{n}\}.$
In particular,
$$|\det(F'_n)|=|\det(F_n)|.$$
If $n = \prod\limits_{i=1}^r p_i^{e_i}$ is an odd integer where $p_1,\dots,p_r$ are distinct odd primes, then by Theorem 1.1 of \cite{s2}, 
$$\operatorname*{sign}(\tau_n(k))=
\begin{cases}
	-1&r=1\ \text{and}\ p_1\equiv1\ \text{or}\ 4e_1+3\pmod8,\\
	-1&r=2\ \text{and}\ p_1+p_2\equiv0\pmod4,\\
	1& \text{otherwise}.
\end{cases}$$
Moreover, by Lemma 2.1 of \cite{s2},
$$\#N_n^{inv}\equiv \frac{\phi(n)}{2}-\delta_{r,1}\pmod{2},$$
where $\delta_{r,1}$ is $1$ or $0$ according as $r = 1$ or not. Combining these yields，
$$\det(F'_n)=\varepsilon(n)\det(F_n).$$
 In other words, $\det(F_n)=\varepsilon(n)\det(F'_n)$. In particular, when $n=p$ is an odd prime, a direct computation shows that
$$\varepsilon(p)=\operatorname*{sign}(\tau_p(k))(-1)^{\#N_p^{inv}}=\left(\tfrac{-2}{p}\right).$$

	\end{proof}

If $\chi$ is not primitive, the following tools will allow us to handle this case.

	\begin{lem}[\cite{pp}]\label{char}
		Let $\chi $ be any Dirichlet character $\bmod{q}$. Then there exist a unique
		divisor $q^\ast\mid q$ and a unique \emph{primitive} character
		$\chi^\ast $ $\bmod{q^{\ast}}$such that for any $n$ coprime to $q$,
		\begin{equation}\label{eq:agree-on-coprime}\tag{14}
			\chi(n)=\chi^\ast(n).
		\end{equation}
		Conversely, for any divisor $q^\ast\mid q$ and any primitive character
		$\chi^\ast \bmod{q^\ast}$, there exists a unique character $\chi \bmod q$
		for which \eqref{eq:agree-on-coprime} holds whenever $(n,q)=1$. In fact,
		\begin{equation}\label{eq:product-with-principal}\tag{15}
			\chi(n)=\chi^{\ast}(n)\,\chi_{0}(n),
		\end{equation}
		where $\chi_{0}\bmod{q}$ denotes the principal character .
	\end{lem}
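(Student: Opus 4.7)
The plan is to recover the primitive character $\chi^{*}$ and its conductor $q^{*}$ as the smallest data through which $\chi$ factors on residues coprime to $q$, and then to reverse the construction explicitly via multiplication by the principal character. The entire argument reduces to the Chinese Remainder Theorem together with a minimality argument.

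For the first assertion, I would introduce
\begin{equation*}
\mathcal D=\{\,d\mid q\,:\, \chi(a)=\chi(b)\ \text{whenever}\ a\equiv b\pmod d\ \text{and}\ (ab,q)=1\,\}.
\end{equation*}
Since $q\in\mathcal D$, the set is non-empty. The key step is to show that $\mathcal D$ is closed under taking greatest common divisors: if $d_{1},d_{2}\in\mathcal D$ and $a\equiv b\pmod{\gcd(d_{1},d_{2})}$ with $(ab,q)=1$, CRT produces an integer $c$ coprime to $q$ with $c\equiv a\pmod{d_{1}}$ and $c\equiv b\pmod{d_{2}}$, so $\chi(a)=\chi(c)=\chi(b)$. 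Then $q^{*}:=\min\mathcal D$ itself lies in $\mathcal D$. To define $\chi^{*}$ on $(\mathbb Z/q^{*}\mathbb Z)^{\times}$, I would show that every residue class mod $q^{*}$ coprime to $q^{*}$ contains a representative $a'$ coprime to the full modulus $q$ (for each prime $p\mid q$ with $p\nmid q^{*}$, a suitable translate of $a$ by a multiple of $q^{*}$ avoids $p$), and set $\chi^{*}(a):=\chi(a')$; well-definedness is exactly the defining property of $q^{*}$. Primitivity of $\chi^{*}$ follows from minimality: any proper divisor $q^{**}\mid q^{*}$ through which $\chi^{*}$ factors would also lie in $\mathcal D$, contradicting $q^{*}=\min\mathcal D$. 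Uniqueness of the pair $(q^{*},\chi^{*})$ is then immediate.

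For the converse, given any divisor $q^{*}\mid q$ and any primitive character $\chi^{*}$ mod $q^{*}$, I define $\chi$ on $\mathbb Z$ by $\chi(n)=\chi^{*}(n)\chi_{0}(n)$, where $\chi_{0}$ is the principal character mod $q$. A short check shows $\chi$ is completely multiplicative, has period $q$, and agrees with $\chi^{*}$ on integers coprime to $q$. Uniqueness of such a $\chi$ is clear, since any two characters mod $q$ agreeing on all integers coprime to $q$ must coincide everywhere (both vanish off the coprime residues).

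The main obstacle I anticipate is the two uses of CRT: proving $\mathcal D$ is closed under gcd, and producing in each residue class $a\pmod{q^{*}}$ (with $(a,q^{*})=1$) a representative coprime to $q$. Both require a careful prime-by-prime analysis, with extra care when a prime $p$ divides both $q^{*}$ and $q/q^{*}$; in that case one has to verify that the coprimality condition $(a,q^{*})=1$ already forces $p\nmid a$, so that no further adjustment is needed to land in the coprime residues modulo $q$.
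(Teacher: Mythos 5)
Your argument is correct and is essentially the standard conductor construction found in the reference \cite{pp} that the paper cites for this lemma (the paper itself gives no proof): one shows the set of induced moduli is closed under gcd via CRT, takes the minimal one as $q^\ast$, lifts residues coprime to $q^\ast$ to residues coprime to $q$ to define $\chi^\ast$, and recovers $\chi$ as $\chi^\ast\chi_0$. The only point you compress is the final ``uniqueness is immediate'': strictly one must still check that any primitive $\chi^\ast\bmod q^\ast$ inducing $\chi$ forces $q^\ast=\min\mathcal D$ (otherwise $\chi^\ast$ would itself be induced from $\min\mathcal D$, contradicting primitivity), but this follows by one more application of the same lifting argument and is not a real gap.
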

	
	\begin{lem}[\cite{yw}]\label{Lfunction}If $\chi$ is a non-principal character$\pmod n$ with conductor $f_{\chi}$ induced by the primitive character $\chi$ in the sense of Lemma \ref{char}, then
		$$L(s,\chi)=L(s,\chi^{*})\prod\limits_{\substack{p\mid n\\p\,\nmid \, f_{\chi}}}\biggl(1-\frac{\chi^{*}(p)}{p^s}\biggr)$$
	\end{lem}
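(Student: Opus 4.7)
The plan is to prove the identity first in the region of absolute convergence $\operatorname{Re}(s)>1$ by comparing Euler products, and then to extend it to the whole complex plane by analytic continuation.

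In the region $\operatorname{Re}(s)>1$, I would invoke the Euler products
$$L(s,\chi)=\prod_{p}\bigl(1-\chi(p)p^{-s}\bigr)^{-1},\qquad L(s,\chi^{*})=\prod_{p}\bigl(1-\chi^{*}(p)p^{-s}\bigr)^{-1}.$$
By Lemma \ref{char}, $\chi(m)=\chi^{*}(m)\chi_{0}(m)$, so $\chi(p)=\chi^{*}(p)$ whenever $p\nmid n$ and $\chi(p)=0$ whenever $p\mid n$. On the other hand, since $\chi^{*}$ is primitive modulo $f_{\chi}$, we have $\chi^{*}(p)=0$ for every $p\mid f_{\chi}$; and because $f_{\chi}\mid n$, such primes also divide $n$. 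Consequently, the local factors of $L(s,\chi)$ and $L(s,\chi^{*})$ coincide except precisely at the primes $p$ with $p\mid n$ and $p\nmid f_{\chi}$: at such primes $L(s,\chi^{*})$ contributes $(1-\chi^{*}(p)p^{-s})^{-1}$ whereas $L(s,\chi)$ contributes $1$. Taking the ratio of the two Euler products yields
$$L(s,\chi)=L(s,\chi^{*})\prod_{\substack{p\mid n\\ p\,\nmid\, f_{\chi}}}\Bigl(1-\frac{\chi^{*}(p)}{p^{s}}\Bigr),$$
which is exactly the stated formula.

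To conclude, I would extend the identity from $\operatorname{Re}(s)>1$ to all $s\in\mathbb{C}$: since $\chi$ is non-principal, so is $\chi^{*}$, hence both $L$-functions are entire; the finite product on the right is a polynomial in the variables $p^{-s}$ and is therefore also entire. Analytic continuation then delivers the identity globally. There is no serious obstacle in this argument — it is essentially the bookkeeping of Euler factors, with the only inputs being the Euler product, the factorization $\chi=\chi^{*}\chi_{0}$ from Lemma \ref{char}, and the primitivity of $\chi^{*}$.
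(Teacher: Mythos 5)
Your argument is correct and complete. The paper itself gives no proof of Lemma \ref{Lfunction} --- it is quoted from the reference \cite{yw} --- so there is nothing to compare against, but your Euler-product bookkeeping (local factors agree for $p\nmid n$; both are trivial for $p\mid f_{\chi}$ since $\chi(p)=\chi^{*}(p)=0$; only the primes $p\mid n$, $p\nmid f_{\chi}$ contribute the stated correction factors) together with analytic continuation is exactly the standard proof, and the one detail that needs checking --- that $\chi^{*}$ is non-principal, hence $L(s,\chi^{*})$ entire, because $\chi$ non-principal forces $f_{\chi}>1$ --- is covered by your remark.
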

	
We now establish a formula that expresses a cotangent sum in terms of the values of Dirichlet $L$-functions at $s=1$.
\begin{lem}\label{impri}
	For a Dirichlet odd character $\chi\bmod n$ with conductor $f_{\chi}$, we have
$$\sum_{j\in U_n}\chi(j)\cot(\frac{j\pi}{n})=\frac{2n}{\pi}L(1,\chi)=\frac{2n}{\pi}L(1,\chi^{*})\prod\limits_{\substack{p\mid n\\p\nmid f_{\chi}}}\biggl(1-\frac{\chi^{*}(p)}{p}\biggr).$$
	\end{lem}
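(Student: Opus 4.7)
The plan is to first reduce the sum on the left to an ordinary sum over all residues $1\le j\le n-1$, then evaluate that sum by a standard digamma/Hurwitz-zeta computation, and finally appeal to Lemma \ref{Lfunction} for the second equality.

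First I would observe that, since $\chi$ is a character modulo $n$, $\chi(j)=0$ whenever $(j,n)>1$; and since $\chi$ is odd while $\cot(j\pi/n)$ is also odd (and $n$-antiperiodic in a suitable sense), the summand $\chi(j)\cot(j\pi/n)$ is invariant under $j\mapsto -j$. Pairing $j$ with $n-j$ (noting that when $n$ is even the term $j=n/2$ contributes $0$) gives
\[
\sum_{j\in U_n}\chi(j)\cot\!\Big(\tfrac{j\pi}{n}\Big)=\sum_{j=1}^{n-1}\chi(j)\cot\!\Big(\tfrac{j\pi}{n}\Big).
\]

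Next I would compute $L(1,\chi)$ by grouping terms of the series $\sum_{m\ge 1}\chi(m)/m$ according to residue classes modulo $n$. Because $\chi$ is odd it is non-principal, so $\sum_{a=1}^{n}\chi(a)=0$. Writing the partial sum up to $Nn$ and using this cancellation to regularize, one obtains
\[
L(1,\chi)=-\frac{1}{n}\sum_{a=1}^{n-1}\chi(a)\,\psi\!\Big(\tfrac{a}{n}\Big),
\]
where $\psi$ is the digamma function. This is the only step where a small amount of care is required, since the inner series $\sum_{k\ge 0}1/(kn+a)$ diverges; the regularization must be carried out simultaneously over all residues $a$.

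Then I would exploit the reflection formula $\psi(1-x)-\psi(x)=\pi\cot(\pi x)$ together with the substitution $a\mapsto n-a$, under which $\chi(a)\mapsto -\chi(a)$. This yields $\sum_{a}\chi(a)\psi(a/n)=-\sum_{a}\chi(a)\psi(1-a/n)$, so averaging produces
\[
\sum_{a=1}^{n-1}\chi(a)\psi\!\Big(\tfrac{a}{n}\Big)=-\frac{\pi}{2}\sum_{a=1}^{n-1}\chi(a)\cot\!\Big(\tfrac{\pi a}{n}\Big).
\]
Combining this with the previous display and the initial reduction gives the first equality
\[
\sum_{j\in U_n}\chi(j)\cot\!\Big(\tfrac{j\pi}{n}\Big)=\frac{2n}{\pi}L(1,\chi).
\]
The second equality is then immediate from Lemma \ref{Lfunction}, which rewrites $L(1,\chi)$ in terms of $L(1,\chi^{*})$ and the Euler factors at primes dividing $n$ but not $f_\chi$. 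The only genuine obstacle is the regularization in the digamma step; once it is set up correctly the rest is a routine pairing argument.
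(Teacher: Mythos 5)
Your argument is correct, and it is genuinely more self-contained than what the paper does: for the first equality the paper simply cites Theorem 8.2 of Kanemitsu--Tsukada and then invokes Lemma \ref{Lfunction} for the second, whereas you actually derive the cotangent--$L$-value identity. Your three steps all check out. The reduction from $U_n$ to $\{1,\dots,n-1\}$ works because the summand $\chi(j)\cot(j\pi/n)$ is periodic in $j$ with period $n$ (both factors are), so the negative elements of $U_n$ may be shifted by $n$ into $(n/2,n)$; equivalently, as you say, the summand is invariant under $j\mapsto n-j$ since both $\chi$ and $\cot(\,\cdot\,\pi/n)$ pick up a sign. The formula $L(1,\chi)=-\tfrac{1}{n}\sum_{a=1}^{n-1}\chi(a)\psi(a/n)$ is the standard regularization using $\sum_{a}\chi(a)=0$ (valid since an odd character is non-principal), and the reflection formula $\psi(1-x)-\psi(x)=\pi\cot(\pi x)$ combined with the sign change $\chi(n-a)=-\chi(a)$ delivers exactly $\sum_{a=1}^{n-1}\chi(a)\cot(\pi a/n)=\tfrac{2n}{\pi}L(1,\chi)$. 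The payoff of your route is a proof from first principles that makes the role of the oddness of $\chi$ transparent (it is used twice: once for non-principality, once in the reflection step); the cost is that you must justify the term-by-term regularization of the conditionally convergent series, which the paper sidesteps entirely by outsourcing to the reference. Either way the second equality is immediate from Lemma \ref{Lfunction} evaluated at $s=1$.
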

\begin{proof}
	The first equality follows from \cite[Theorem 8.2]{kt} and the second can be deduced by Lemma \ref{Lfunction}.
	\end{proof}

We define the function spaces
\begin{align*}
	V_1&=\{\,h \mid h:U_n\to \mathbb{C}\ \text{is a function}\,\},V_2=\{\,h \mid h:S_n\to \mathbb{C}\ \text{is a function}\,\},\\
	V_1^{\pm}&=\{\,h\in V_1 \mid h(-x)=\pm h(x)\,\}.
	\end{align*}
In particular, $V_1^-$ admits a natural identification with $V_2$ as stated in the next lemma.
\begin{lem}\label{id}
The space $V_1^-$ is canonically isomorphic to the function space $V_2$.
\end{lem}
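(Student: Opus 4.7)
The plan is to exhibit the isomorphism as the restriction map $\Phi : V_1^{-} \to V_2$ given by $\Phi(h) = h|_{S_n}$ and check that it is a bijective linear map. The whole content of the lemma is the set-theoretic observation that $U_n$ decomposes as a disjoint union $U_n = S_n \sqcup (-S_n)$; once this is in place, the rest is automatic.

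First I would verify the decomposition $U_n = S_n \sqcup (-S_n)$. Since $0 \notin U_n$ (as $\gcd(0,n)=n\neq 1$ for $n\geq 3$) and $\pm n/2 \notin U_n$ (strict inequality in the definition, and in any case $\gcd(n/2,n)>1$ when $n$ is even and $n\geq 3$), every element of $U_n$ lies strictly between $-n/2$ and $n/2$ and is nonzero. Writing such an element as $a$, either $a\in S_n$ or $-a\in S_n$, and these two possibilities are mutually exclusive. The disjointness is immediate from the sign conditions.

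Next I would check that $\Phi$ is well-defined and linear: restriction is obviously linear, and $S_n\subset U_n$, so $\Phi(h)\in V_2$. For injectivity, if $\Phi(h)=0$, then $h(a)=0$ for all $a\in S_n$; by the oddness condition $h(-a)=-h(a)=0$, so $h$ vanishes on $-S_n$ as well, giving $h\equiv 0$ on $U_n$. For surjectivity, given $g\in V_2$ I would define $\tilde g : U_n \to \mathbb{C}$ by
$$\tilde g(a) = \begin{cases} g(a) & a\in S_n,\\ -g(-a) & a\in -S_n,\end{cases}$$
which is well-defined thanks to the disjoint union, visibly satisfies $\tilde g(-x)=-\tilde g(x)$, and restricts to $g$ on $S_n$.

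There is no real obstacle here; the lemma is purely structural. The only subtle point to be careful about is ruling out the case $a=n/2$ when $n$ is even, but this is handled by the coprimality condition in the definition of $U_n$. The inverse assignment $g\mapsto \tilde g$ gives the canonical nature of the isomorphism, and in what follows one may freely identify functions on $S_n$ with odd functions on $U_n$.
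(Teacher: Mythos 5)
Your proof is correct and follows essentially the same route as the paper: both exhibit the restriction map $\Phi(h)=h|_{S_n}$ and verify injectivity via oddness and surjectivity via the explicit odd extension. Your additional explicit check that $U_n = S_n \sqcup (-S_n)$ is a harmless (and slightly more careful) elaboration of what the paper leaves implicit.
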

\begin{proof}
Given $g\in V_1^-$, its restriction to $S_n$ determines $g$ uniquely, because if $k\in S_n$, then $-k$ lies outside $S_n$ and $g(-k)=-g(k)$. Thus the map
\[
\Phi:V_1^- \to  V_2, \quad f \mapsto f|_{S_n}
\]
is injective. It is also surjective: given any function $h:S_n\to\mathbb C$, we may extend it to $U_n$ by setting $g(k)=h(k)$ for $k\in S_n$, $g(-k)=-h(k)$. This $g$ belongs to $V_1^-$ and $\Phi(g)=h$. Hence $\Phi$ is a linear isomorphism. 
\end{proof}
Next, we recall a well-known result with respect to the values of $L$-functions at $s=1$.
	\begin{prop}[\cite{w2}]\label{h}
	Let $\{\chi_1,\ldots,\chi_m\}$ be the set of all odd Dirichlet characters
	modulo $n$ and $K=\mathbb{Q}(\zeta_n)$. Suppose $K^+$ be its maximal real subfield, then 
	$$\prod_{\chi \thinspace\text{odd}}L(1,\chi^*)=\frac{(2\pi)^{\frac{\phi(n)}{2}}h_n^-}{Qw\sqrt{\mid d(K)/d(K^+)\mid}}=\frac{(2\pi)^{\frac{\phi(n)}{2}}h_n^-}{Qw\sqrt{\prod\limits_{\chi \text{odd}}f_{\chi}}},
	$$
	where $\chi^*$ is primitive character associated with $\chi$ and $f_{\chi}$, $Q$, $w$, $h_n^-$ are as in Theorem \ref{cotthm}.
\end{prop}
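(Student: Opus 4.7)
The plan is to derive the identity from the analytic class number formula applied to both $K=\mathbb{Q}(\zeta_n)$ and its maximal real subfield $K^+$, by dividing the resulting formulas. Because $K/\mathbb{Q}$ is abelian, Dedekind's theorem factors the Dedekind zeta function as $\zeta_K(s)=\prod_{\chi\bmod n}L(s,\chi^*)$, where the product ranges over all Dirichlet characters modulo $n$ and $\chi^*$ is the primitive character inducing $\chi$ as in Lemma \ref{char}. Similarly $\zeta_{K^+}(s)=\prod_{\chi\text{ even}}L(s,\chi^*)$, so dividing yields
$$\prod_{\chi\text{ odd}}L(s,\chi^*)=\frac{\zeta_K(s)}{\zeta_{K^+}(s)}.$$
Since each $L(s,\chi^*)$ is entire (as $\chi$ odd is non-principal) and both zeta functions have a simple pole at $s=1$, passing to $s\to 1$ converts the right-hand side into the ratio of residues.

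Next I would invoke the class number formula for each field. For $K$ one has $r_1=0$, $r_2=\phi(n)/2$, giving
$$\operatorname*{Res}_{s=1}\zeta_K(s)=\frac{(2\pi)^{\phi(n)/2}h_K R_K}{w\sqrt{|d_K|}},$$
while for $K^+$ one has $r_1=\phi(n)/2$, $r_2=0$, $w_{K^+}=2$, giving
$$\operatorname*{Res}_{s=1}\zeta_{K^+}(s)=\frac{2^{\phi(n)/2}h_{K^+}R_{K^+}}{2\sqrt{|d_{K^+}|}}.$$
Taking the ratio and using the definition $h_n^-=h_K/h_{K^+}$, one obtains
$$\prod_{\chi\text{ odd}}L(1,\chi^*)=\frac{(2\pi)^{\phi(n)/2}}{2^{\phi(n)/2}}\cdot\frac{h_n^-R_K}{R_{K^+}}\cdot\frac{2}{w}\cdot\sqrt{\frac{|d_{K^+}|}{|d_K|}}.$$

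To finish, two classical ingredients are needed. First, the conductor-discriminant formula gives $|d_K|=\prod_{\chi\bmod n}f_\chi$ and $|d_{K^+}|=\prod_{\chi\text{ even}}f_\chi$, so $|d_K|/|d_{K^+}|=\prod_{\chi\text{ odd}}f_\chi$. Second, one needs the regulator comparison $R_K/R_{K^+}=2^{\phi(n)/2-1}/Q$. This comes from noting that $\mathcal{O}_K^\times$ and $\mathcal{O}_{K^+}^\times$ have the same free rank $\phi(n)/2-1$; the logarithms at the complex places of $K$ relate to those at the real places of $K^+$ by a factor $2$ per embedding (since units of $K^+$ are real), producing $2^{\phi(n)/2-1}$, and the index $Q=[\mathcal{O}_K^\times:\mu_K\mathcal{O}_{K^+}^\times]\in\{1,2\}$ corrects for the possibility that $\mathcal{O}_K^\times$ is strictly larger. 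Substituting these two identities yields exactly the claimed formula.

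The main technical obstacle is the regulator comparison, since it requires establishing the Hasse unit index $Q$ and showing that it takes the values given in the statement (namely $Q=1$ for prime powers and $Q=2$ otherwise); this rests on Dirichlet's unit theorem together with a careful structural analysis of units in cyclotomic fields. The remaining ingredients — the class number formula, the Euler product factorization of $\zeta_K(s)$, and the conductor-discriminant formula — are standard. Since the formulation of the proposition explicitly attributes the result to \cite{w2}, I would simply cite that reference for the computation of $Q$ and the regulator comparison, and present the derivation as the clean consequence outlined above.
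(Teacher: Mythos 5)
Your derivation is correct and is precisely the standard argument from the cited reference \cite{w2} (ratio of analytic class number formulas for $K$ and $K^+$, Dedekind zeta factorization into $L(s,\chi^*)$, conductor--discriminant formula, and the regulator comparison $R_K/R_{K^+}=2^{\phi(n)/2-1}/Q$ with the Hasse unit index $Q$); the constants check out. The paper itself offers no proof of this proposition --- it simply quotes Washington --- so your write-up supplies exactly the intended justification.
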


Now, we are in a position to present the proof of Theorem \ref{cotthm}.
\begin{proof}[Proof of Theorem \ref{cotthm}]
		Let $A'_n=\bigg(\cot(\tfrac{jk'\pi}{n})\bigg)_{j,k\in S_n}$. By Lemma \ref{sign},
	$$|\det(A_n)|=|\det(A'_n)|.$$
	and when $n$ is odd, $\det(A_n)=\varepsilon(n)\det(A'_n)$.
	
	For $x \in U_n$, we consider the kernel function
		$$K(x):=\frac{1}{2}\cot(\tfrac{x\pi}{n})$$
		Define the involution operator $T$ on $V_1$ by
		$$(Tg)(j)=\sum_{k \in U_n}K(jk')g(k)=\frac{1}{2}\sum_{k \in U_n}\cot(\tfrac{jk'\pi}{n})g(k),$$
		for any $g\in V_1$. A straightforward computation shows that $T$ annihilates even functions, hence $T$ can be defined on $V_1^{-}$.
		When $g$ is an odd function, 
		$$(Tg)(j)=\sum_{k \in S_n}\cot(\tfrac{jk'\pi}{n})g(k).$$

		By Lemma \ref{id}, define a new operator $T'$ on $V_2$ by $T'(g):=T(\Phi^{-1}(g))$.
	
		Consider the indicator functions on $S_n$:
		\[
		\delta_k(j) =
		\begin{cases}
			1, & j=k,\\
			0, & j\neq k,
		\end{cases}
		\qquad k\in S_n.
		\]
		The family $\{\delta_k : k\in S_n\}$ forms a basis of $V_2$.
		
		With respect to this basis, the operator $T^{'}$ has the matrix
		\[
		A_n' =\Bigl( \cot\!\left(\tfrac{\pi j k'}{n}\right) \Bigr)_{j,k\in S_n}.
		\]
		We now compute the eigenvalues of $T'$. For any odd Dirichlet character $\chi$ and any $j\in S_n$, we obtain by Lemma \ref{impri}
		\begin{align*}
			T'(\chi|_{S_n})=(T\chi)(j) 
			&= \frac{1}{2}\sum_{k \in U_n}\cot(\tfrac{jk'\pi}{n})\chi(k) \\[6pt]
			&= \frac{1}{2}\sum_{k \in U_n}\cot(\tfrac{k\pi}{n})\chi(jk') \\[6pt]
			&= \frac{\chi(j)}{2}\sum_{k \in U_n}\cot(\tfrac{k\pi}{n})\overline{\chi}(k) \\[6pt]
			&= \chi(j) \frac{n}{\pi}L(1,\overline{\chi}).\\[6pt]
		\end{align*}
Thus for any odd character $\chi$, $\frac{n}{\pi}L(1,\overline{\chi})$ is an eigenvalue of $T'$. As $\chi$ ranges over all odd characters modulo $n$, it follows that the number of eigenvalues is $\frac{\phi(n)}{2}$. There is a natural one-to-one correspondence between each character $\chi$ and its conjugate character $\overline{\chi}$. From this correspondence, we deduce that 
	$$|\det(A_n)|=\det(A'_n)=\bigg(\frac{n}{\pi}\bigg)^{\frac{\phi(n)}{2}}\prod_{\chi \text{odd}}L(1,\chi).$$
By Proposition \ref{h} and Lemma \ref{Lfunction}, we obtain the following explicit formula for $|\det(A_n)|$,
$$|\det(A_n)|=\det(A'_n)= \frac{(2n)^{\frac{\phi(n)}{2}}h_n^-}{Qw\sqrt{\prod_{\chi \text{odd}}f_{\chi}}}\prod\limits_{\substack{p\mid n\\p\nmid f}}\biggl(1-\frac{\chi^*(p)}{p}\biggr),$$
By the above expression, $\det(A'_n)>0$. If $n$ is odd, then $\operatorname*{sign}(\det(A_n))=\varepsilon(n)$.
		\end{proof}
Using Wang’s method from the proof of (\ref{wangk}), we provide an alternative proof of  Theorem \ref{cotthm}. 		
	\begin{proof}[Alternative Proof of Theorem \ref{cotthm}]
		First of all, we compute the determinants of $A'_n$. Let $\{\chi_1,\ldots,\chi_m\}$ be the set of all odd Dirichlet characters
		modulo $n$, where $m=\phi(n)/2$. Let $S_n=\{a_1,\ldots,a_m\}$ and set
		\[
		\Omega=m^{-1/2}\,[\chi_i(a_j)]_{1\le i,j\le m}.
		\]
		By the orthogonality relations for Dirichlet characters,
		\[
		\frac{1}{\phi(n)}\sum_{a\in(\mathbb{Z}/n\mathbb{Z})^{\times}}\chi_i(a)\,\overline{\chi_j(a)}=\delta_{ij},
		\]
		so $\Omega$ is a unitary matrix, and
		\[
		\sum_{{\substack{\chi\bmod{n}\\ \chi\text{odd}}}}\chi(a)=
		\begin{cases}
			m,& a=1,\\
			-m,& a=n-1,\\
			0,& \text{otherwise},
		\end{cases}
		\qquad
		\sum_{a\in S_n}\chi_i(a)\,\overline{\chi_j(a)}=m\,\delta_{ij}.
		\]
		Note that
		\[
		\det\!\Bigl(\cot\!\bigl(\tfrac{ab'\pi}{n}\bigr)\Bigr)_{a,b\in S_n}
		=\det\!\Bigl(\Omega\;(\cot\!\bigl(\tfrac{ab'\pi}{n}\bigr))_{a,b\in S_n}\;\Omega^\ast\Bigr)
		=\det\!\Bigl(\tfrac{1}{m}\,(s_{ij})_{1\le i,j\le m}\Bigr),
		\]
		where
		\[
		s_{ij}:=\sum_{a\in S_n}\sum_{b\in S_n}\chi_i(a)\,
		\cot\!\left(\tfrac{ab'\pi}{n}\right)\overline{\chi_j(b)} .
		\]
		
	One can show that
		\begin{align*}
			&\sum_{a\in(\mathbb Z/n\mathbb Z)^\times}\ \sum_{b\in(\mathbb Z/n\mathbb Z)^\times}
			\chi_i(a)\,\cot(\frac{ab'\pi}{n})\,\overline{\chi_j(b)}\\
			&=\sum_{c\in(\mathbb Z/n\mathbb Z)^\times}\ \sum_{b\in(\mathbb Z/n\mathbb Z)^\times}
			\chi_i(cb)\,\cot(\frac{c\pi}{n})\,\overline{\chi_j(b)}\\
			&=\left(\sum_{b\in(\mathbb Z/n\mathbb Z)^\times}\chi_i(b)\,\overline{\chi_j(b)}\right)
			\left(\sum_{c\in(\mathbb Z/n\mathbb Z)^\times}\cot(\frac{c\pi}{n})\,\chi_i(c)\right)\\
			&=\delta_{ij}\,\frac{2n\phi(n)}{\pi}\,L(1,\chi).
		\end{align*}
		
		On the other hand,
		\begin{align*}
			\sum_{a\notin S_n}\ \sum_{b\in S_n}\chi_i(a)\,\cot(\frac{ab'\pi}{n})\,\overline{\chi_j(b)}
			&=\sum_{a\in S_n}\ \sum_{b\in S_n}\chi_i(-a)\,\cot(\frac{-ab'\pi}{n})\,\overline{\chi_j(b)}\\
			&=s_{ij}.
		\end{align*}
		Similarly, we have
		\begin{align*}
			\sum_{a\in S_n}\ \sum_{b\notin S_n}\chi_i(a)\,\cot(\frac{ab'\pi}{n})\,\overline{\chi_j(b)}
			&=s_{ij},\\
			\sum_{a\notin S_n}\ \sum_{b\notin S_n}\chi_i(a)\,\cot(\frac{ab'\pi}{n})\,\overline{\chi_j(b)}&=s_{ij}.
		\end{align*}
		Hence
		$$s_{ij}=\delta_{ij}\,\frac{n\phi(n)}{2\pi}\,L(1,\chi)$$
		Then
		$$\det(A'_n)=\bigg(\frac{n}{\pi}\bigg)^m\prod_{\chi \text{odd}}L(1,\chi).
		$$
	The rest proceeds exactly as in the previous proof, so we omit the details.
	\end{proof}

Meanwhile, we also give the proof of Corollary \ref{cotcor}.
	\begin{proof}[Proof of Corollary \ref{cotcor}]

	When $n=p^t$, there is no prime $p$ with $p\mid n$ and $p\nmid f$. Consequently,
	$$\prod\limits_{\substack{p\mid n\\p\nmid f}}\biggl(1-\frac{\chi^{\ast}(p)}{p}\biggr)=1.$$
In this case we obtain $Q=1$ and $w=2p^t$. Then by Theorem \ref{cotthm},
		$$\det(A_{p^t})=\varepsilon(p^t)\frac{(2p^t)^{\tfrac{p^{t-1}(p-1)}{2}-1}h_{p^t}^{-}}{\sqrt{\prod_{\chi \text{odd}}f_{\chi}}}.
	$$
In particular, when t=1,
$$|\det(A_{p})|=\varepsilon(p)2^{\tfrac{p-3}{2}}p^{\tfrac{p-5}{4}}h_{p}^{-}.$$
Then by Lemma \ref{sign},
$$\det(A_{p})=\left(\tfrac{-2}{p}\right)2^{\tfrac{p-3}{2}}p^{\tfrac{p-5}{4}}h_{p}^{-}.$$
\end{proof}

\section{Determinants of tangent matrices}
\begin{lem}\label{tanimpri}
	Let $n$ be a positive odd integer. For a Dirichlet odd character $\chi\bmod n$, we have
	$$\sum_{j\in U_n}\chi(j)\tan(\frac{j\pi}{n})=(1-2\overline{\chi}(2))\frac{2n}{\pi}L(1,\chi).$$
\end{lem}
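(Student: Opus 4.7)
The plan is to reduce the tangent sum to the cotangent sum already evaluated in Lemma~\ref{impri}, using the elementary trigonometric identity
\[
\tan\theta = \cot\theta - 2\cot(2\theta),
\]
which one verifies immediately by writing the right-hand side over the common denominator $\sin\theta\cos\theta$ and applying $\cos 2\theta = 2\cos^2\theta-1$.

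First I would apply this identity term-by-term to split
\[
\sum_{j\in U_n}\chi(j)\tan\!\Bigl(\tfrac{j\pi}{n}\Bigr)
= \sum_{j\in U_n}\chi(j)\cot\!\Bigl(\tfrac{j\pi}{n}\Bigr)
- 2\sum_{j\in U_n}\chi(j)\cot\!\Bigl(\tfrac{2j\pi}{n}\Bigr).
\]
The first sum is exactly $\frac{2n}{\pi}L(1,\chi)$ by Lemma~\ref{impri}. For the second sum, since $n$ is odd, $2$ is a unit modulo $n$, so multiplication by $2$ induces a bijection on $(\mathbb Z/n\mathbb Z)^\times$. Because $\cot(\frac{x\pi}{n})$ has period $n$ in $x$ (as $\cot$ is $\pi$-periodic), I may replace $2j$ by its unique representative $k\in U_n$ congruent to $2j\bmod n$ without changing the cotangent value. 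Substituting $j\equiv 2^{-1}k\pmod n$ and using $\chi(2^{-1})=\overline{\chi}(2)$ gives
\[
\sum_{j\in U_n}\chi(j)\cot\!\Bigl(\tfrac{2j\pi}{n}\Bigr)
= \overline{\chi}(2)\sum_{k\in U_n}\chi(k)\cot\!\Bigl(\tfrac{k\pi}{n}\Bigr)
= \overline{\chi}(2)\,\frac{2n}{\pi}L(1,\chi),
\]
again by Lemma~\ref{impri}.

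Combining the two evaluations immediately yields
\[
\sum_{j\in U_n}\chi(j)\tan\!\Bigl(\tfrac{j\pi}{n}\Bigr)
= \bigl(1 - 2\overline{\chi}(2)\bigr)\frac{2n}{\pi}L(1,\chi),
\]
which is the claim. There are no real technical obstacles: the argument is essentially bookkeeping once the identity $\tan\theta=\cot\theta-2\cot(2\theta)$ is in hand. The only point worth checking carefully is the legitimacy of the change of variable $j\mapsto 2j$ in the cotangent sum, which rests on two facts that both hinge on $n$ being odd: that $2\in(\mathbb Z/n\mathbb Z)^\times$ (so the map permutes $U_n$ after reduction mod $n$) and that $\cot(\frac{x\pi}{n})$ is unchanged by shifting $x$ by multiples of $n$.
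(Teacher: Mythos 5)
Your proof is correct and follows essentially the same route as the paper: both use the identity $\tan\theta=\cot\theta-2\cot(2\theta)$ and then reduce the shifted sum via the substitution $j\mapsto 2^{-1}k$ (valid since $n$ is odd) to apply Lemma~\ref{impri} twice. Your write-up is simply a more detailed version of the paper's argument, with the change-of-variable step made explicit.
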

\begin{proof}
	Since
	$$\tan(x)=\cot(x)-2\cot(2x),$$
	it follows that from Lemma \refeq{impri},
	\begin{align*}
		\sum_{j\in U_n}\chi(j)\tan(\frac{j\pi}{n})&=\sum_{j\in U_n}\chi(j)\cot(\frac{j\pi}{n})-
		2\sum_{j\in U_n}\chi(j)\cot(\frac{2j\pi}{n})\\
		&=(1-2\overline{\chi}(2))\sum_{j\in U_n}\chi(j)\cot(\frac{j\pi}{n})\\
		&=(1-2\overline{\chi}(2))\frac{2n}{\pi}L(1,\chi).
	\end{align*}
\end{proof}
We are now giving the proof of Theorem \ref{tanthm}.
\begin{proof}[Proof of Theorem \ref{tanthm}]
	Let $\mathbf T^{(2)}_n=\bigg(\tan(\frac{jk'\pi}{n})\bigg)_{j,k\in S_n}$. By Lemma \ref{sign},
	$$|\det(\mathbf T^1_n)|=|\det(\mathbf T^{(2)}_n)|.$$
		Define the operator $T_1$ on $V_1$ by
	$$(T_1g)(j)=\frac{1}{2}\sum_{k \in U_n}\tan(\frac{jk'\pi}{n})g(k),$$
	for any $g\in V_1$. Repeating process as in the proof of Theorem \ref{cotthm}, we have
	
	$$\det(\mathbf T^{(2)}_n)=(\frac{1}{2})^{\phi(n)/2}\prod\limits_{\chi \text{odd}}\sum_{j\in U_n}\chi(j)\tan(\frac{j\pi}{n}).$$
	Then, by Lemma \ref{tanimpri},
	$$|\det(\mathbf T^1_n)|=\det(\mathbf T^{(2)}_n)=\bigg(\frac{n}{\pi}\bigg)^{\frac{\phi(n)}{2}}\prod_{\chi \text{odd}}(1-2\chi(2))L(1,\chi).$$
If $n$ is odd, it follows that
	$$\det(\mathbf T^1_n)=\varepsilon(n)\bigg(\frac{n}{\pi}\bigg)^{\frac{\phi(n)}{2}}\prod_{\chi \text{odd}}(1-2\chi(2))L(1,\chi).$$
\end{proof}

\section{Determinants of cosecant matrices }
\begin{lem}\label{cscimpri}
	Let $n$ be a positive odd integer. For a Dirichlet odd character $\chi\bmod n$, we have
	$$\sum_{j\in U_n}\chi(j)\csc(\frac{2j\pi}{n})=(1-\overline{\chi}(2))\frac{2n}{\pi}L(1,\chi).$$
\end{lem}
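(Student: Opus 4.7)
The plan is to mimic the proof of Lemma \ref{tanimpri} by reducing to Lemma \ref{impri} via a trigonometric identity that expresses $\csc(2x)$ as a linear combination of cotangents.

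First I would establish the identity $\csc(2x)=\cot(x)-\cot(2x)$, which follows at once from combining the right-hand side over the common denominator $\sin x\sin 2x$ and invoking the angle-subtraction formula $\sin(2x-x)=\sin x$. Applying this termwise splits the target sum as
\[
\sum_{j\in U_n}\chi(j)\csc\!\left(\tfrac{2j\pi}{n}\right)=\sum_{j\in U_n}\chi(j)\cot\!\left(\tfrac{j\pi}{n}\right)-\sum_{j\in U_n}\chi(j)\cot\!\left(\tfrac{2j\pi}{n}\right).
\]
The first summand is exactly $\frac{2n}{\pi}L(1,\chi)$ by Lemma \ref{impri}, so all the work is in the second summand.

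For the second summand, the key observation is that because $n$ is odd, $2$ is a unit modulo $n$ with inverse $\overline{2}$. Since $\chi$ is periodic mod $n$ and $\cot$ has period $\pi$, the quantity $\chi(j)\cot(\tfrac{2j\pi}{n})$ depends only on the residue class $j\bmod n$; combined with the fact that $n$ odd makes $U_n$ a complete set of residues for $(\mathbb Z/n\mathbb Z)^\times$, one may perform the substitution $j\mapsto\overline{2}k$ to obtain
\[
\sum_{j\in U_n}\chi(j)\cot\!\left(\tfrac{2j\pi}{n}\right)=\chi(\overline{2})\sum_{k\in U_n}\chi(k)\cot\!\left(\tfrac{k\pi}{n}\right)=\overline{\chi}(2)\cdot\frac{2n}{\pi}L(1,\chi),
\]
where Lemma \ref{impri} is invoked a second time. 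Subtracting the two evaluations produces the claimed $(1-\overline{\chi}(2))\frac{2n}{\pi}L(1,\chi)$.

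The only subtle point to verify is the legitimacy of the reindexing in the second cotangent sum, i.e.\ that summing a function of $j\bmod n$ over $U_n$ coincides with summing over $(\mathbb Z/n\mathbb Z)^\times$; this is precisely where the odd hypothesis on $n$ is used, and is the single spot requiring care. Apart from that, the derivation is a direct analogue of the tangent identity in Lemma \ref{tanimpri}, with the only quantitative difference being the coefficient $1$ in front of $\cot(2x)$ (rather than $2$ as in $\tan x=\cot x-2\cot 2x$), which is what produces $1-\overline{\chi}(2)$ instead of $1-2\overline{\chi}(2)$.
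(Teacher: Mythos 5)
Your proof is correct and is essentially the paper's argument: the paper uses $\csc(2x)=\tfrac12(\cot x+\tan x)$ and cites Lemma \ref{tanimpri}, whereas you use $\csc(2x)=\cot x-\cot 2x$ and redo the reindexing $j\mapsto \overline{2}j$ directly, but since Lemma \ref{tanimpri} is itself proved by exactly that reindexing applied to $\tan x=\cot x-2\cot 2x$, the two derivations are the same computation. Your identity and the handling of the substitution (using that $\chi(j)\cot(j\pi/n)$ depends only on $j\bmod n$ and that $U_n$ is a complete set of unit representatives for odd $n$) are both valid.
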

\begin{proof}
	Since
	$$\csc(2x)=\frac{1}{2}(\cot(x)+\tan(x)),$$
	it follows that from Lemma \refeq{impri},
	\begin{align*}
		\sum_{j\in U_n}\chi(j)\csc(\frac{2j\pi}{n})&=\frac{1}{2}(\sum_{j\in U_n}\chi(j)\cot(\frac{j\pi}{n})+
		\sum_{j\in U_n}\chi(j)\tan(\frac{j\pi}{n}))\\
		&=(1-\overline{\chi}(2))\sum_{j\in U_n}\chi(j)\cot(\frac{j\pi}{n})\\
		&=(1-\overline{\chi}(2))\frac{2n}{\pi}L(1,\chi).
	\end{align*}
\end{proof}
\begin{lem}\label{lem3.3}
	Let $\ell$ be as Corollary \ref{csccor}. Then
	\[
	\prod_{\chi \text{odd}}\bigl(1-\chi(2)\bigr)
	=
	\begin{cases}
		\displaystyle 2^{\frac{p-1}{\ell}}, & \text{if $\ell$ is even},\\[6pt]
		\displaystyle	0, & \text{if $\ell$ is odd}.
	\end{cases}
	\]
	In particular, $p\equiv 7\bmod{8}$, 
	$$\prod_{\chi \text{odd}}\bigl(1-\chi(2)\bigr)=0.$$
\end{lem}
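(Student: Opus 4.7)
The plan is to rewrite the product over odd characters as a product of factors $(1-\zeta^j)$ for a primitive $\ell$-th root of unity $\zeta$, and then analyze the result by splitting into cases based on the parity of $\ell$.

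First I would fix a primitive root $g$ of $(\mathbb{Z}/p\mathbb{Z})^\times$ and a generator $\omega$ of the character group with $\omega(g)=e^{2\pi i/(p-1)}$. A direct computation gives $\omega^j(-1)=(-1)^j$, so the odd characters modulo $p$ are exactly the $\omega^j$ with $j\in\{1,3,\ldots,p-2\}$. Setting $\zeta=\omega(2)$, which is automatically a primitive $\ell$-th root of unity since $2$ has order $\ell$ in $(\mathbb{Z}/p\mathbb{Z})^\times$, one obtains the reformulation
\[
\prod_{\chi\text{ odd}}(1-\chi(2)) \;=\; \prod_{\substack{j\text{ odd}\\ 1\le j\le p-1}}(1-\zeta^j).
\]

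The next step is a case split on the parity of $\ell$. If $\ell$ is odd, then $j$ and $j+\ell$ have opposite parities, so for each fixed residue $r$ modulo $\ell$ the odd integers $j\in\{1,\ldots,p-1\}$ with $j\equiv r\pmod{\ell}$ occur with the same multiplicity $(p-1)/(2\ell)$. The product therefore contains the factor $(1-\zeta^0)^{(p-1)/(2\ell)}=0$ and vanishes. If $\ell$ is even, then $j$ and $j+\ell$ share parity, so only odd residues modulo $\ell$ appear, each with multiplicity $(p-1)/\ell$. The essential algebraic observation is the identity
\[
\prod_{\substack{r\text{ odd}\\ 1\le r\le \ell-1}}(X-\zeta^r) \;=\; X^{\ell/2}+1,
\]
valid because $(\zeta^r)^{\ell/2}=(-1)^r$; specializing $X=1$ gives $\prod_{r\text{ odd}}(1-\zeta^r)=2$, and hence the product equals $2^{(p-1)/\ell}$.

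The ``in particular'' assertion then follows immediately: when $p\equiv 7\pmod{8}$ one has $\bigl(\tfrac{2}{p}\bigr)=1$, so $\ell\mid(p-1)/2$. But $(p-1)/2$ is odd in this congruence class, so $\ell$ is odd and the product vanishes by the first case. The main obstacle throughout is the clean combinatorial accounting of which residues modulo $\ell$ are represented by odd $j\in\{1,\ldots,p-1\}$; once this is settled the polynomial identity above closes the argument without further computation.
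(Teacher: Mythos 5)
Your proposal is correct and follows essentially the same route as the paper: reduce the product to powers of $(1-\zeta_\ell^i)$ via the order of $2$, kill the $\ell$ odd case with the factor $1-\zeta^0$, and evaluate the $\ell$ even case as $2^{(p-1)/\ell}$. The only difference is that you justify the identity $\prod_{r\ \mathrm{odd}}(1-\zeta^r)=2$ via the factorization $X^{\ell/2}+1$, which the paper asserts without proof.
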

\begin{proof}
	If $\ell$ is odd, then $\ell\,\mid\,(p-1)/2$ and
	\[
	\prod_{\chi\, \text{odd}}\bigl(1-\chi(2)\bigr)
	=(1-1)^{\frac{p-1}{2\ell}}\,\prod_{i=1}^{l-1}(1-\zeta_\ell^i)^{\frac{p-1}{2\ell}}
	=0,
	\]
	where $\zeta_\ell$ is a primitive $\ell$-th root of unity.

	If $\ell$ is even, then $\ell\,\mid\,p-1$. Hence
	\[
	\prod_{\chi\, \text{odd}}\bigl(1-\chi(2)\bigr)
	=\prod_{i=1}^{l/2}(1-\zeta_\ell^{2i-1})^{\frac{p-1}{\ell}}=2^{\frac{p-1}{\ell}}.
	\]
	In particular, if $p\equiv 7\bmod{8}$, then by quadratic reciprocity $(\frac{2}{p})=1$. Hence $2$ is not a primitive root of $p$. Note that $l\,|\,\frac{p-1}{2}$ and $\frac{p-1}{2}\equiv 3\bmod{4}$, so $\ell$ is odd. It follows that $\prod_{\chi \text{odd}}\bigl(1-\chi(2)\bigr)=0.$
\end{proof}

Now we are turning to the proof of Theorem \ref{cscthm}.

\begin{proof}[Proof of Theorem \ref{cscthm}]
	Let $C'_n=\bigg(\csc(\frac{2jk'\pi}{n})\bigg)_{j,k\in S_n}$. By Lemma \ref{sign},
	$$|\det(C_n)|=|\det(C'_n)|.$$
	and when $n$ is odd, $\det(C_n)=\varepsilon(n)\det(C'_n)$.
	
	Define the operator $T_2$ on $V_1$
	\medskip
	$$(T_2 g)(j)=\frac{1}{2}\sum_{k \in U_n} \csc \left(\frac{2 \pi j k'}{n}\right) g(k).$$
	for any $g\in V_1$.
	Repeating the proof process of Theorem \ref{cotthm}, we have
	
	$$\det(C'_n)=(\frac{1}{2})^{\phi(n)/2}\prod\limits_{\chi \text{odd}}\sum_{j\in U_n}\chi(j)\csc(\frac{2j\pi}{n}).$$
	Then, by Lemma \ref{cscimpri},
	$$|\det(C_n)|=\det(C'_n)=\bigg(\frac{n}{\pi}\bigg)^{\frac{\phi(n)}{2}}\prod_{\chi \text{odd}}(1-\chi(2))L(1,\chi)$$
For odd $n$, we have
	$$\det(C_n)=\varepsilon(n)\bigg(\frac{n}{\pi}\bigg)^{\frac{\phi(n)}{2}}\prod_{\chi \text{odd}}(1-\chi(2))L(1,\chi)$$
\end{proof}
Meanwhile, we also prove Corollary \ref{csccor}.
\begin{proof}[Proof of Corollary \ref{csccor}]
	Similar to the proof of Corollary \ref{cotcor}, the result now follows from Theorem \ref{cscthm}, Proposition \ref{h} and Lemma \ref{sign}.
\end{proof}

\section{Determinants of sine matrices }

Let $\zeta_n$ be the $n$-th root of unity. Recall that $G(l, \chi)=\sum\limits_{j=1}^{n}\chi(j)\zeta_n^j$ is the Gauss sum attached to $\chi$. In particular, for $l=1$ we write $G(1,\chi)=\tau(\chi)$. The Gauss sum satisfies the following well-known properties,
\begin{lem}[\cite{a}]\label{Gauss}
	Let $\chi$ be a Dirichlet character modulo $n$. Then for any $l$ coprime to $n$,
$$
G(l, \chi)=\bar{\chi}(l) \tau(\chi)
$$
\end{lem}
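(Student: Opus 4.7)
The plan is to reduce the claim to the defining multiplicativity of $\chi$ via a change of summation variable. The key observation is that, because $(l,n)=1$, multiplication by $l$ induces a bijection on $\mathbb{Z}/n\mathbb{Z}$ that preserves the subset of units. Consequently the sum $G(l,\chi)=\sum_{j=1}^{n}\chi(j)\zeta_{n}^{\,lj}$ can be re-indexed by $k\equiv lj\pmod n$, yielding $j\equiv l^{-1}k\pmod n$, where $l^{-1}$ is the inverse of $l$ modulo $n$.

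First I would rewrite
\[
G(l,\chi)=\sum_{j=1}^{n}\chi(j)\,\zeta_{n}^{\,lj}
=\sum_{k=1}^{n}\chi(l^{-1}k)\,\zeta_{n}^{\,k},
\]
using periodicity of both $\chi$ and $\zeta_{n}^{\,k}$ modulo $n$. Next, I would apply the completely multiplicative property of Dirichlet characters: $\chi(l^{-1}k)=\chi(l^{-1})\chi(k)$. The case $(k,n)>1$ causes no problem because both sides vanish; for $(k,n)=1$ multiplicativity is immediate. Finally, since $\chi(l^{-1})=\chi(l)^{-1}=\overline{\chi(l)}=\bar\chi(l)$ (as $|\chi(l)|=1$), I obtain
\[
G(l,\chi)=\bar\chi(l)\sum_{k=1}^{n}\chi(k)\,\zeta_{n}^{\,k}=\bar\chi(l)\,\tau(\chi),
\]
which is the stated identity.

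There is no real obstacle here; the only point requiring a line of care is justifying $\chi(l^{-1}k)=\bar\chi(l)\chi(k)$ uniformly in $k$, which I would handle by the two-case argument above ($(k,n)=1$ versus $(k,n)>1$). Since the paper cites \cite{a} and the lemma is a standard textbook fact, a short four-line proof along these lines should suffice.
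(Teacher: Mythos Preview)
Your proof is correct and is exactly the standard argument; the paper itself gives no proof of this lemma, merely citing Apostol \cite{a}, and your change-of-variable $k\equiv lj\pmod n$ together with $\chi(l^{-1})=\bar\chi(l)$ is precisely the textbook derivation found there.
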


\begin{lem}[\cite{w2}]\label{modulo}
$$|\tau(\chi)|=\sqrt{f_{\chi}}$$
where $f_{\chi}$ is the conductor of $\chi$.
\end{lem}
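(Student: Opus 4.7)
\textbf{Proof proposal for Lemma \ref{modulo}.} The plan is to compute $|\tau(\chi)|^2$ by double-counting the quantity $\sum_{l=0}^{f_\chi-1}|G(l,\chi)|^2$. Throughout I take $\chi$ to be a primitive character modulo $f_\chi$, so that the sharpened Gauss-sum identity $G(l,\chi)=\bar\chi(l)\tau(\chi)$ holds for every $l$ with $(l,f_\chi)=1$, together with the vanishing $G(l,\chi)=0$ whenever $(l,f_\chi)>1$. (The latter I would either quote from \cite{a} or establish in one line: if $d=\gcd(l,f_\chi)>1$, pick, by primitivity, a unit $u\equiv 1\pmod{f_\chi/d}$ with $\chi(u)\neq 1$; substituting $a\mapsto ua$ in $G(l,\chi)=\sum_a\chi(a)\zeta_{f_\chi}^{la}$ multiplies the sum by $\chi(u)^{-1}\neq 1$ while leaving it invariant, forcing it to vanish.)

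Granted this, the first evaluation is immediate from Lemma \ref{Gauss}:
\[
\sum_{l=0}^{f_\chi-1}|G(l,\chi)|^2=\sum_{\substack{l=0\\(l,f_\chi)=1}}^{f_\chi-1}|\bar\chi(l)|^2\,|\tau(\chi)|^2=\phi(f_\chi)\,|\tau(\chi)|^2,
\]
since $|\bar\chi(l)|=1$ exactly on units. For the second evaluation I would expand the modulus squared and swap the order of summation:
\[
\sum_{l=0}^{f_\chi-1}|G(l,\chi)|^2=\sum_{a,b=1}^{f_\chi}\chi(a)\bar\chi(b)\sum_{l=0}^{f_\chi-1}\zeta_{f_\chi}^{l(a-b)}=f_\chi\sum_{a=1}^{f_\chi}|\chi(a)|^2=f_\chi\,\phi(f_\chi),
\]
where the inner geometric sum is $f_\chi$ when $a\equiv b\pmod{f_\chi}$ and $0$ otherwise.

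Equating the two expressions yields $|\tau(\chi)|^2=f_\chi$, and taking positive square roots gives the lemma. The only non-routine ingredient is the primitivity-based vanishing statement; apart from that, the argument is a clean orthogonality calculation and I do not expect any genuine obstacle.
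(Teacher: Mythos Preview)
Your argument is correct and is exactly the classical double-counting proof one finds in Washington or Apostol; the paper itself gives no proof at all, merely citing \cite{w2}, so there is nothing to compare against beyond noting that your write-up supplies the standard textbook justification. One small remark: you rightly restrict to $\chi$ primitive modulo $f_\chi$, which is the only reading under which the statement is literally true (for imprimitive $\chi$ modulo $n$ the paper's own Lemma~\ref{de} shows $\tau(\chi)$ can vanish), and this is how the lemma is actually used in Section~5.
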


\begin{lem}[\cite{pp}]\label{de}
	Suppose that $\chi\bmod{n}$ and $\chi^*\bmod{n^*}$ are equivalent in the sense of Lemma \ref{char}, then 
$$\tau\left(\chi\right)=\chi^*\left(\frac{n}{n^*}\right) \mu\left(\frac{n}{n^*}\right) \tau\left(\chi^*\right),$$
where $\mu(n)$ is the M$\ddot{o}$bius function.
\end{lem}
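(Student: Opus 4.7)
The plan is to start from the decomposition $\chi(j) = \chi^*(j)\,\chi_0(j)$ supplied by Lemma \ref{char}, so that, writing $q = n/n^*$,
$$\tau(\chi) \;=\; \sum_{\substack{1 \le j \le n \\ (j,n)=1}} \chi^*(j)\,\zeta_n^{\,j}.$$
The coprimality condition $(j,n)=1$ splits as $(j,n^*)=1$ together with $(j,q)=1$; the first is already built into the support of $\chi^*$, so only the second remains to be handled, and for this I would use Möbius inversion via the identity $\sum_{d \mid (j,q)} \mu(d) = \mathbf{1}_{(j,q)=1}$.

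Interchanging summation and substituting $j = dk$ yields
$$\tau(\chi) \;=\; \sum_{d \mid q} \mu(d) \sum_{1 \le k \le n/d} \chi^*(dk)\,\zeta_{n/d}^{\,k} \;=\; \sum_{d \mid q} \mu(d)\,\chi^*(d) \sum_{\substack{1 \le k \le n/d \\ (k,n^*)=1}} \chi^*(k)\,\zeta_{n/d}^{\,k},$$
where the multiplicativity $\chi^*(dk)=\chi^*(d)\chi^*(k)$ is legitimate on both sides, since whenever $(d,n^*)>1$ both $\chi^*(d)$ and $\chi^*(dk)$ vanish so nothing is lost.

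The central step is to evaluate the inner sum. Since $n^*\mid n/d$, I would parametrize $k$ by writing $k = a + b n^*$ with $0 \le a < n^*$ and $0 \le b < q/d$. The periodicity of $\chi^*$ and the factorization $\zeta_{n/d}^{\,a + b n^*} = \zeta_{n/d}^{\,a}\,\zeta_{q/d}^{\,b}$ cause the sum to split as
$$\Bigl(\sum_{\substack{0 \le a < n^* \\ (a,n^*)=1}} \chi^*(a)\,\zeta_{n/d}^{\,a}\Bigr)\,\Bigl(\sum_{b=0}^{q/d - 1} \zeta_{q/d}^{\,b}\Bigr).$$
The geometric sum in $b$ vanishes unless $q/d = 1$, i.e.\ $d = q$, in which case the $a$-sum becomes precisely $\tau(\chi^*)$. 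Only the single term $d = q$ survives, and the identity $\tau(\chi) = \mu(q)\,\chi^*(q)\,\tau(\chi^*)$ follows.

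The only delicate point is to check that the formula remains correct in the degenerate cases: if $(q,n^*)>1$ then $\chi^*(q)=0$ annihilates the right-hand side, while on the left the only would-be surviving term also carries a factor of $\chi^*(q)$; if $q$ is not square-free then $\mu(q)=0$ and again both sides vanish. All remaining work reduces to routine manipulations with Möbius inversion, character periodicity, and the vanishing $\sum_{b=0}^{m-1}\zeta_m^{\,b}=0$ for $m>1$—the last of which is what collapses the expansion to the single divisor $d=q$.
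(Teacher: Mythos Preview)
Your argument is correct. The M\"obius detection of the condition $(j,q)=1$, the substitution $j=dk$, and the splitting $k=a+bn^*$ so that the geometric sum over $b$ kills every divisor except $d=q$ is exactly the standard route to this identity; the degenerate cases you flag (where $(q,n^*)>1$ or $q$ is not squarefree) are handled correctly by the vanishing of $\chi^*(q)$ or $\mu(q)$.

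There is nothing to compare against in the paper itself: Lemma~\ref{de} is quoted from \cite{pp} without proof, so the authors treat it as a known input rather than something to be established. Your write-up supplies what the paper omits.
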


\begin{lem}\label{zero} When $4\nmid n$  and $n$ is not square-free,
	$$\prod_{\chi \text{odd}}\tau(\chi)=0$$
\end{lem}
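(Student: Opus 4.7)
The plan is to show that the product vanishes by exhibiting a single odd character $\chi$ modulo $n$ whose Gauss sum equals zero. The mechanism for this vanishing is Lemma~\ref{de}: if the primitive character $\chi^{*}$ associated with $\chi$ has conductor $f_{\chi}$ for which $\gcd(n/f_{\chi}, f_{\chi}) > 1$, then $\chi^{*}(n/f_{\chi}) = 0$ by the definition of a Dirichlet character, which forces $\tau(\chi) = 0$.

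First I would extract an odd prime $p$ with $p^{2} \mid n$. Such a prime exists because $n$ is not square-free, and the hypothesis $4 \nmid n$ rules out the only even possibility $p=2$. Writing $n = p^{a}m$ with $a \geq 2$ and $\gcd(p,m)=1$, I pick any odd Dirichlet character $\psi$ modulo $p$ (such a $\psi$ exists since $(\mathbb{Z}/p\mathbb{Z})^{\times}$ is cyclic of even order $p-1 \geq 2$, so exactly half of its characters are odd) and let $\chi$ be the character modulo $n$ induced from $\psi$, that is, $\chi(k) = \psi(k \bmod p)$ whenever $\gcd(k,n)=1$. Since $p \geq 3$, one has $\chi(-1) = \psi(-1) = -1$, so $\chi$ is odd; and the conductor of $\chi$ is evidently $f_{\chi} = p$ with associated primitive character $\chi^{*} = \psi$.

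The final step is a direct application of Lemma~\ref{de}:
$$\tau(\chi) \;=\; \chi^{*}\!\left(n/f_{\chi}\right)\mu\!\left(n/f_{\chi}\right)\tau(\chi^{*}) \;=\; \psi(p^{a-1}m)\,\mu(p^{a-1}m)\,\tau(\psi).$$
Because $a \geq 2$, the integer $n/p = p^{a-1}m$ is divisible by $p$, and $\psi$ is a character modulo $p$, so $\psi(p^{a-1}m) = 0$. Hence $\tau(\chi) = 0$, and the full product $\prod_{\chi\,\mathrm{odd}}\tau(\chi)$ vanishes because it contains this zero factor.

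I do not anticipate a real obstacle here; the argument reduces to one clean observation, namely that an odd prime appearing to at least the second power in $n$ allows us to lift an odd character from modulus $p$ to modulus $n$ without changing its conductor, after which Lemma~\ref{de} does all the work. The vanishing is in fact overdetermined when $a \geq 3$, since then $p^{2} \mid n/p$ forces $\mu(n/p) = 0$ as well, but the factor $\psi(n/p) = 0$ already covers every case uniformly.
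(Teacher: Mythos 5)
Your proof is correct and follows essentially the same route as the paper: extract an odd prime $p$ with $p^{2}\mid n$, lift an odd character $\psi$ modulo $p$ to a character $\chi$ modulo $n$, and apply Lemma~\ref{de} to get $\tau(\chi)=\psi(n/p)\mu(n/p)\tau(\psi)=0$ since $p\mid n/p$. Your additional justification that $4\nmid n$ guarantees the repeated prime is odd is a detail the paper leaves implicit, and is a welcome clarification.
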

\begin{proof}
	If $n$ satisfies the above condition, then there exists an odd prime $p$ such that $p^2|n$. Let $\psi$ be a primitive odd Dirichlet character modulo $p$. By Lemma \ref{char}, there exists an odd Dirichlet character $\chi \pmod n$
	induced by $\psi$ such that
	$$
	\chi(m)=\psi(m)\qquad\text{for all }(m,n)=1 .
	$$
	Then by Lemma \ref{de},
\begin{equation*}
	\tau\left(\chi\right)=\psi\left(n/p\right) \mu\left(n/p\right) \tau\left(\psi\right)
\end{equation*}
As $p^2\mid n$, it follows that $(n/p,p)>1$, and thus $\psi(n/p)=0$. Consequently, $\tau\left(\chi\right)=0$ for some
odd Dirichlet character $\chi$. We deduce that
$$\prod_{\chi \text{odd}}\tau(\chi)=0.$$
	\end{proof}
To begin wtih, we consider the determinants of the sine matrices when $n$ is even,
\begin{prop}\label{sineven}
	 Let $n$ be a positive even integer $\geq 3$. Then
	\begin{equation*}
		|\det(B_n)|=\begin{cases}
		|\det(B_{n_1})|&n=2n_1,\enspace n_1\text{odd}\\
			1&n=4\\
			0\qquad &4|n
		\end{cases}
	\end{equation*}
\end{prop}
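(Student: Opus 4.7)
The plan is to split by the $2$-adic valuation of $n$ and handle each case via elementary manipulations using only the oddness and $2\pi$-periodicity of $\sin$.

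For $4\mid n$ with $n\ge 8$, I would exhibit a fixed-point-free involution on $S_n$ that produces equal rows of $B_n$. For any $j\in S_n$, the integer $j^{\ast}:=n/2-j$ is again odd (since $n/2$ is even) and coprime to $n$, hence lies in $S_n$. Using $\sin(k\pi-\theta)=-(-1)^k\sin\theta$ together with the fact that every $k\in S_n$ is odd, one finds $\sin(\tfrac{2j^{\ast}k\pi}{n})=\sin(\tfrac{2jk\pi}{n})$, so rows $j$ and $j^{\ast}$ of $B_n$ coincide. The involution would have a fixed point only at $j=n/4$, which is coprime to $n$ only for $n=4$; hence for $n\ge 8$ the involution is fixed-point-free, $B_n$ has two equal rows, and $\det(B_n)=0$. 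The case $n=4$ is immediate since $S_4=\{1\}$ and $B_4=(\sin(\pi/2))=(1)$.

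The substantive case is $n=2n_1$ with $n_1$ odd. Here $S_n$ is precisely the set of odd elements of $(0,n_1)$ coprime to $n_1$, so $\psi:S_n\to S_{n_1}$, $j\mapsto (n_1-j)/2$, is a bijection with inverse $k\mapsto n_1-2k$. Writing $j=n_1-2j'$ and $k=n_1-2k'$, and using $\sin(n_1\pi+\theta)=-\sin\theta$ (valid because $n_1$ is odd) together with $2\pi$-periodicity, a direct computation gives
\[
\sin\!\bigl(\tfrac{2jk\pi}{n}\bigr)=\sin\!\bigl(\tfrac{jk\pi}{n_1}\bigr)=-\sin\!\bigl(\tfrac{4j'k'\pi}{n_1}\bigr).
\]
Since $\psi$ is applied simultaneously to rows and columns (a conjugation by a permutation matrix), this yields $|\det(B_n)|=|\det(C_n)|$, where $C_n:=\bigl(\sin(\tfrac{4j'k'\pi}{n_1})\bigr)_{j',k'\in S_{n_1}}$.

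The remaining step, which is the main technical one, is to match $|\det(C_n)|$ with $|\det(B_{n_1})|$. Setting $g(x)=\sin(2\pi x/n_1)$, which is odd with period $n_1$, the entries of $C_n$ and $B_{n_1}$ are $g(2j'k')$ and $g(j'k')$. Because $n_1$ is odd, multiplication by $2$ is a bijection on $(\mathbb Z/n_1\mathbb Z)^{\times}$; composing with the reflection $x\mapsto n_1-x$ when necessary produces a permutation $\tau$ of $S_{n_1}$ and signs $\epsilon_k\in\{\pm 1\}$ such that $g(2j'k')=\epsilon_k\,g(j'\tau(k))$, exactly in the spirit of Lemma \ref{sign}. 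Thus $C_n$ equals $B_{n_1}$ right-multiplied by a signed permutation matrix, so the two determinants differ by $\pm 1$, completing the proof. The main obstacle is precisely this bookkeeping: one must track the composition of $\psi$ with the multiplication-by-$2$ signed permutation and check that every stray sign is absorbed when passing to absolute values.
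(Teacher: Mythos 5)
Your proposal is correct and follows essentially the same route as the paper: the involution $j\mapsto n/2-j$ producing equal rows when $4\mid n$ (with $n=4$ as the lone fixed-point case), the trivial computation for $n=4$, and for $n=2n_1$ the reduction $\sin(2jk\pi/n)=\sin(jk\pi/n_1)$ followed by the invertibility of $2$ modulo $n_1$ to identify the matrix with $B_{n_1}$ up to a signed permutation. Your version is in fact more careful than the paper's, which compresses the re-indexing $S_n\leftrightarrow S_{n_1}$ and the sign bookkeeping into a single sentence.
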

\begin{proof}
	When $n=4$, a direct computation shows that $\det(B_n)=1$.
	  
	Now assume that $n>4$ with $4\mid n$. For any $j\in S_n$, we note that $j$ must be odd, and moreover $\tfrac{n}{2}-j \in S_n$. For such a pair $(j, \tfrac{n}{2}-j)$ and for any $k\in S_n$, we have
	$$\sin(\frac{2(\frac{n}{2}-j)k\pi}{n})=\sin(\frac{2jk\pi}{n}).$$
	So the matrix has two identical rows and $\det(B_n)=0.$
	
	Now we consider the case when $n=2n_1$, $\sin(\frac{2jk\pi}{n})=\sin(\frac{jk\pi}{n_1})$. Since $(2,n_1)=1$, there exists a multiplicative inverse of $2$ modulo $n_1$. The mapping $k\mapsto 2k$ is a permutation of $S_{n_1}$ possibly with sign changes. Hence the columns of $B_{n_1}$ are merely a reordering of those of $B_n$. In conclusion,
 $$|\det(B_n)|=|\det(B_{n_1})|.$$
\end{proof}
Motived by Proposition \ref{sineven}, we need only to consider the case when $n$ is odd. Then we have the following result,
\begin{thm}\label{sinodd} Let $n$ be a positive odd integer $\geq 3$. Then
	\begin{equation*}
	|\det(B_n)|=\begin{cases}
	2^{-\phi(n)/2}\prod\limits_{\chi \,\text{odd}} \sqrt{f_{\chi}}\qquad &\text{when  n is square-free}\\
	0\qquad &\text{otherwise}.
	\end{cases}
\end{equation*}
	\end{thm}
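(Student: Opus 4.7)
The plan is to carry over the spectral-decomposition template used in the proofs of Theorems \ref{cotthm}, \ref{tanthm}, and \ref{cscthm}, now with the sine kernel replacing the cotangent. First I would observe that $g(x) = \sin(\tfrac{2x\pi}{n})$ is an odd, $n$-periodic function on $\mathbb{Z}$, so Lemma \ref{sign} applies and gives $|\det(B_n)| = |\det(B'_n)|$, where $B'_n = \bigl(\sin(\tfrac{2jk'\pi}{n})\bigr)_{j,k\in S_n}$. Then I would introduce the operator $T_3$ on $V_1$ defined by
$$(T_3 g)(j) = \frac{1}{2}\sum_{k \in U_n} \sin(\tfrac{2jk'\pi}{n})\,g(k).$$
Since the kernel is odd in $j$, $T_3$ annihilates even functions and descends, via Lemma \ref{id}, to an operator on $V_2$ whose matrix in the basis $\{\delta_k\}_{k\in S_n}$ is exactly $B'_n$.

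Next I would compute the eigenvalues. For an odd character $\chi \bmod n$, the change of variables $m = jk'$ yields
$$T_3(\chi)(j) = \frac{\chi(j)}{2}\sum_{m \in U_n} \sin(\tfrac{2m\pi}{n})\,\overline{\chi}(m).$$
Writing $\sin(\tfrac{2m\pi}{n}) = \tfrac{1}{2i}(\zeta_n^m - \zeta_n^{-m})$ and using $\chi(-1) = -1$ together with Lemma \ref{Gauss} (to identify $G(-1,\overline{\chi}) = -\tau(\overline{\chi})$), the inner sum collapses to $-i\tau(\overline{\chi})$. Thus each $\chi|_{S_n}$ is an eigenvector of $T_3$ with eigenvalue $-\tfrac{i}{2}\tau(\overline{\chi})$. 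Orthogonality of characters shows the $\phi(n)/2$ such restrictions form a basis of $V_2$, and since $\chi \mapsto \overline{\chi}$ permutes the odd characters,
$$|\det(B_n)| = |\det(B'_n)| = 2^{-\phi(n)/2}\prod_{\chi\,\text{odd}} |\tau(\chi)|.$$

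Finally, I would conclude by a case analysis. If $n$ is odd and square-free, then for every odd character $\chi \bmod n$ induced from a primitive $\chi^* \bmod f_\chi$, the quotient $n/f_\chi$ is square-free and coprime to $f_\chi$, so Lemma \ref{de} gives $|\tau(\chi)| = |\mu(n/f_\chi)|\sqrt{f_\chi} = \sqrt{f_\chi}$, producing the claimed formula. If $n$ is odd but not square-free, Lemma \ref{zero} (applicable because $4 \nmid n$) forces $\prod_{\chi\,\text{odd}} \tau(\chi) = 0$, hence $\det(B_n) = 0$.

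I expect no serious obstacle: the only delicate point is keeping track of conductors and signs in passing between primitive and imprimitive characters when invoking Lemma \ref{de} and Lemma \ref{modulo}; everything else is a direct transcription of the spectral framework already established.
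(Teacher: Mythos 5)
Your proposal is correct and follows essentially the same route as the paper: the same operator $T_3$, the same eigenvalue computation yielding $\tau(\overline{\chi})/(2i)$, and the same appeal to Lemma \ref{zero} for the non-square-free case. If anything, your treatment of the square-free case is slightly more careful than the paper's, since you explicitly invoke Lemma \ref{de} to justify $|\tau(\chi)|=\sqrt{f_\chi}$ for the (possibly imprimitive) characters $\chi \bmod n$, whereas the paper cites Lemma \ref{modulo} directly.
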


\begin{proof}
	\noindent
	Let $B'_n=\bigg(\sin(\frac{2jk'\pi}{n})\bigg)_{j,k\in S_n}$. By Lemma \ref{sign},
	$$|\det(B_n)|=|\det(B'_n)|.$$
Define the operator $T_3$ on $V_1$
 \medskip
	$$(T_3 g)(j)=\frac{1}{2}\sum_{k \in U_n} \sin \left(\frac{2 \pi j k'}{n}\right) g(k).$$
	for any $g\in V_1$. We easily see that when $g$ is an odd function,
	$$(T_3 g)(j)=\sum_{k \in S_n} \sin \left(\frac{2 \pi j k'}{n}\right) g(k).$$
	Similar to the argument used in the proof of Theorem \ref{cotthm}, we can find that  $T_3$ can be defined on $V_1^{-}$
and obtain an new operator $T_3'(g):=T(\Phi^{-1}(g))$ on $V_2$. With respect to this basis $\{\delta_k : k\in S_n\}$, the operator $T_3^{'}$ has matrix
	\[
	B'_n =\Bigl( \sin\!\left(\tfrac{2\pi j k'}{n}\right) \Bigr)_{j,k\in S_n}.
	\]
We now compute the eigenvalues of $T_3'$. For an odd character $\chi$ and $j\in S_n$,
	\begin{align*}
		T_3'(\chi|_{S_n})=(T_3\chi)(j) 
		&= \frac{1}{2}\sum_{k\in U_n}\bigl(\sin(\frac{2\pi jk'}{n}\bigr)\chi(k) \\[6pt]
		&= \frac{1}{2}\sum_{k\in U_n}\bigl(\sin(\frac{2\pi jk}{n}\bigr)\chi(k') \\[6pt]
		&= \frac{1}{4i}\sum_{k\in U_n}\bigl(\zeta_n^{jk}-\zeta_n^{-jk}\bigr)\chi(k') \\[6pt]
		&= \frac{1}{4i}\biggl(\sum_{k\in U_n}\zeta_n^{jk}\chi(k')- \sum_{k\in U_n}\zeta_n^{jk}\chi(-k')\biggr) \\[6pt]
		&=  \frac{\chi(1)-\chi(-1)}{4i}\sum_{k\in U_n}\zeta_n^{jk}\chi(k')\\[6pt]
		&= \frac{1}{2i}\chi(j)\tau(\overline{\chi}),
	\end{align*}
where the final step follows from Lemma \ref{Gauss}.
 
  Thus $\tau(\overline{\chi})/2i$ is an eigenvalue of $T_3'$ and as $\chi$ ranges over all odd characters of $n$, the number of $\tau(\overline{\chi})/2i$ is $\frac{\phi(n)}{2}.$
  Together with the pairing \(\chi\leftrightarrow\overline\chi\), we can deduce that 
  $$|\det(B_n)|=\bigg(\frac{1}{2i}\bigg)^{m}\prod_{\chi \text{odd}}\mid\tau(\chi)\mid.$$
  By Lemma \ref{zero}, $|\det(B_n)|=0$ when $n$ is not square-free. Moreover, by Lemma \ref{modulo}, when $n$ is square-free,
  $$|\det(B_n)|=2^{-m}\prod\limits_{\chi \,\text{odd}} \sqrt{f_{\chi}},$$
which completes the proof of Theorem.
\end{proof}
We  now turn to the proof of Theorem \ref{sinthm}.
\begin{proof}[Proof of Theorem \ref{sinthm}]
	The result now follows from Proposition \ref{sineven} and Theorem \ref{sinodd}.
	\end{proof}

\subsubsection*{\noindent\textbf{Funding}} {\small The authors are supported by National Nature Science Foundation of China (Nos. 11971226, 12231009).}
\subsubsection*{\noindent\textbf{Data Availablity}} {\small No addtional data is available.}

\subsection*{\normalfont\Large\bfseries Declarations}
\par
\subsubsection*{\noindent\textbf{\small Conflict of interest:}} {\small The author has no relevant financial or non-financial interests to disclose.}

\end{document}